\documentclass[11pt,reqno]{amsart}

\usepackage{amsfonts,amssymb}
\usepackage{graphicx,tikz}
\usepackage{tikz-cd} 
\usepackage{float}
\usepackage{amsmath, amsthm, amsfonts}
\usepackage{hyperref}
\usepackage{enumitem}  
\usepackage[capitalise]{cleveref}
\usepackage{comment}
\usepackage{enumitem}
\usepackage{amsrefs}
\usepackage[mathscr]{euscript}
\usepackage{nicefrac}
\usepackage[official]{eurosym}
\usepackage{nomencl}
\usepackage{hyperref}
\usepackage{tabto}
\usepackage{amssymb}
\usepackage{fancyhdr}
\usepackage{amscd}
\usepackage{pdfpages}
\usepackage{graphicx}
\usepackage[english]{babel}
\usepackage{makecell}

\usepackage[margin=1.15in]{geometry}

\theoremstyle{plain}

\newtheorem*{theorem*}{Theorem}
\newtheorem{theorem}{Theorem}[section]

\newtheorem{lemma}[theorem]{Lemma}

\theoremstyle{definition}
\newtheorem{definition}[theorem]{\scshape{Definition}}

\newtheorem{notation}[theorem]{\scshape{Notation}}

\newtheorem{remark}[theorem]{\scshape{Remark}}

\makeatletter
\def\cleardoublepage{\clearpage\if@twoside \ifodd\c@page\else
	\hbox{}
	\thispagestyle{empty}
	\newpage
	\if@twocolumn\hbox{}\newpage\fi\fi\fi}
\makeatother

\def\v{\mathbf{v}}

\numberwithin{equation}{section}

\author[P.~Suhajda]{P\'{e}ter Suhajda}
 
 \address{P\'{e}ter Suhajda:
KTH Royal Institute of Technology, 114 28 Stockholm, Sweden \newline
Stockholm University, 114 19 Stockholm, Sweden}
\email{suhajda@kth.se}

\author[A. Thillaisundaram]{Anitha Thillaisundaram} 
\address{Anitha Thillaisundaram: Centre for Mathematical Sciences, Lund University,  223 62 Lund, Sweden}
\email{anitha.thillaisundaram@math.lu.se}

\date{\today}

\keywords{Frobenius number, formula for Frobenius number for three variables}
 \subjclass[2020]{Primary  11D07;  Secondary  11D04}
	
\title{On the Frobenius number for three variables}

\begin{document}

\begin{abstract}
For positive integers $a$, $b$, and $c$ which have no common divisor, the Frobenius number of $a$, $b$ and $c$ is defined to be the largest integer that cannot be expressed as a linear combination of $a$, $b$ and $c$  with non-negative integer coefficients. In 2017, Tripathi gave an algorithmic formula for the Frobenius number in three variables, however there were some minor inconsistencies in the formula. In this paper, we  settle these inconsistencies. 
\end{abstract}

\maketitle

\section{Introduction}

 For an integer $n\ge 2$, the \emph{Frobenius Problem} in $n$ variables is to determine the largest positive integer that cannot be expressed as a non-negative integer  combination of $n$ given positive integers which have no common divisor. This largest positive integer is known as the \emph{Frobenius number}. More precisely, given $a_1,\ldots,a_n\in\mathbb{N}$ with $\text{gcd}(a_1,\ldots,a_n)=1$, the \emph{Frobenius number} $g(a_1,\ldots,a_n)$ is
 \[
 \max\mathbb{N}\backslash\{a_1x_1+\cdots+a_nx_n\mid x_1,\ldots,x_n\in\mathbb{N}\cup\{0\}\}.
 \]
 
 The two-variable case, i.e. $n=2$, was the origin of the problem, and this case was considered by Sylvester in 1884, and it was proved by W.\,J.~Curran Sharp that $g(a_1,a_2)=a_1a_2-a_1-a_2$; see~\cite{Sylvester}. Apart from considering the Frobenius Problem for higher values of $n$, related aspects are studied  nowadays, such as the  pseudo-Frobenius number (see for instance~\cite{Branco}) and also concerning algorithmic aspects of computing the Frobenius number (see for instance~\cite{Ramirez}).

 Regarding the case $n=3$, it was shown by Curtis~\cite{Curtis} that there is no closed polynomial formula for the Frobenius number for $n=3$, and likewise for higher values of $n$.  Despite this, formulae have been given for certain arithmetic sequences; see \cite{Tripathi} for an overview of these results. For the general case for $n=3$, formulae, of an algorithmic nature, were given by Tripathi in~\cite{Tripathi}, which involved a highly technical approach. However, there were some minor discrepancies in Tripathi's results, which we resolve here. Though the entire framework of~\cite{Tripathi} still applies, and only minor adjustments need to be made along the way, and to the final statement.
 
 More specifically, the first discrepancy in \cite{Tripathi} occurs in Lemma 5 of~\cite{Tripathi}, in the computation of consecutive local minima of a certain function. This Lemma 5 is used in several results building up to the main results. In most cases, the statements of these other results still hold, just adjustments need to be performed to the proofs. However the statement of part of the final main result, i.e. \cite[Thm.~5(b)]{Tripathi}, is unfortunately inconsistent, as seen by the computational data for the Frobenius number that the first author  obtained during the writing of his Bachelor thesis at Lund University~\cite{Peter}. A key aspect for adjusting the proof of the main result relies on Lemma~\ref{lem:difference}, which establishes the correct distance between terms of a certain sequence.
 
 In this paper, we provide all necessary adjustments and corrections; in particular, see Lemma~\ref{lem:5} for the correction of \cite[Lem.~5]{Tripathi} and Lemma~\ref{lem:difference} for the correction of the aforementioned distance. For conciseness and for the reader's convenience, we reproduce many of Tripathi's arguments and proofs here, though we often expand on them and provide extra details for clarity. Due to the large amount of notation required in the statement of the algorithmic formulae, we refer the reader to Section~\ref{sec:greater-than} for the corrected statement, i.e. Theorem~\ref{thm:5}. However, we include here a summary of the formulae. Table~\ref{tab:summary} assumes that all three variables are pairwise coprime. If this is not the case, we recall that one can use Johnson's formula; cf. Theorem~\ref{thm:Johnson} below.

\begin{table}[hb!]
    \centering
    \begin{tabular}{|c|c|}
    \hline
        \makecell{\textbf{Conditions}} & \textbf{Formula for $g(a,b,c) + a$} \\
        \hline
        \makecell[cl]{$\ell\leq k$} & $ab-b$ \\
        \hline
        \makecell[cl]{$\ell > k$ \& $br < cq$} & \makecell{$\begin{cases}
    b\big((\lambda+1)(a-\ell)+r-1\big) & \text{if }\lambda\ge\frac{c(q-1)-br}{b(a-\ell)+c}, \\
    b(a-\ell-1)+c(q-\lambda-1) & \text{if }\lambda\le\frac{c(q-1)-br}{b(a-\ell)+c}.
\end{cases}$}\\
    \hline
    \makecell[cl]{$\ell > k$ \& $br > cq$, \\ $\mu < \lfloor \frac{r}{u}\rfloor$} & \small{$\max\left\{b(r-\mu u-1),b(u-1)+c\big(\mu(q+1)+q(\lfloor\tfrac{(a-\ell-r)\mu}{r}\rfloor+1)\big)\right\}+cq\lfloor\tfrac{a-\ell-1}{r}\rfloor$} \\
    \hline
    \makecell[cl]{$\ell > k$ \& $br > cq$, \\ $\mu < \lfloor \frac{r}{u}\rfloor$ \& $\Lambda > \Delta$} & $\max\left\{b(r-\Delta(a-\ell-r)-1),b(a-\ell-r-1)+c\big(\Delta(q+1)+q\big)\right\}+cq$ \\
    \hline
   \makecell[cl]{$\ell > k$ \& $br > cq$, \\ $\mu < \lfloor \frac{r}{u}\rfloor$ \& $\Delta' > \Lambda'$} & $\max\left\{b(r-1),b\big((a-\ell-1) \bmod r\big) +cq\right\}+cq\lfloor\tfrac{a-\ell-1}{r}\rfloor.$\\
    \hline
    \makecell[cl]{$\ell > k$ \& $br > cq$, \\ $\mu > \lfloor \frac{r}{u}\rfloor$} & $\max\left\{b(\widehat{x}-1)+cy_{w},\, b(\widehat{x}+u-x_{\mu}-1)+cy_{\mu}\right\}+cq\lfloor\tfrac{a-\ell-1}{r}\rfloor.$\\
    \hline
    \end{tabular}
    \vspace{1mm}
    \caption{Summary table of all formulae for three variables $a<b<c$, all assumed to be pairwise coprime. We refer  in particular to Notation~\ref{not:k-l}, Notation~\ref{not:m}, Theorem~\ref{thm:3}, Notation~\ref{notation} and Theorem~\ref{thm:5}  for all notation.}
    \label{tab:summary}
\end{table}

The algorithmic formulae for the Frobenius number in three variables is split over two results in \cite{Tripathi}, namely \cite[Thm.~3]{Tripathi} and \cite[Thm.~5]{Tripathi}. Additionally, using a slightly different parallel method, Tripathi provides alternative algorithmic formulae in \cite[Thm.~4]{Tripathi} and \cite[Thm.~6]{Tripathi}, with the analogous inconsistency in \cite[Lem.~6]{Tripathi}. In this paper, we only deal explicitly with Theorems 3 and 5 of \cite{Tripathi},  as our approach can be correspondingly adjusted to give the necessary corrections to the proof of \cite[Thm.~4]{Tripathi}, to the proof of \cite[Thm.~6]{Tripathi} and to the statement of \cite[Thm.~6(b)]{Tripathi}. For completeness, we provide the corrected verion of \cite[Thm.~6(b)]{Tripathi} at the end of the paper.

\medskip

\noindent \textit{Organisation.} In Section~\ref{sec:preliminaries} we collect together some preliminary results, and give the corrected versions of \cite[Lem. 5 and 7]{Tripathi}. Then, following the approach in~\cite{Tripathi}, we split the main results into two parts, Sections~\ref{sec:less-than} and~\ref{sec:greater-than}, depending on a condition concerning the values of $a$, $b$ and $c$. In particular, we note in Section~\ref{sec:less-than} that the statement of \cite[Thm.~3]{Tripathi} still holds, and in Section~\ref{sec:greater-than} we see that \cite[Thm.~5(a)]{Tripathi} is correct, but that \cite[Thm.~5(b)]{Tripathi} requires some minor adjustment.

\section{Preliminary results}\label{sec:preliminaries}

For the entirety of the paper, let $c>b>a\ge 2$ be pairwise coprime integers, i.e. with $\text{gcd}(a,b)=\text{gcd}(a,c)=\text{gcd}(b,c)=1$. Note that we may make this stronger assumption that $a,b,c$ are pairwise coprime, instead of just $\text{gcd}(a,b,c)=1$, owing to the following result of Johnson~\cite{Johnson}:

\begin{theorem}\label{thm:Johnson}
   \cite[cf.~Thm.~2]{Johnson} 
   Let $a,b,c\ge 2$ with $\textup{gcd}(a,b,c)=1$.    If $\textup{gcd}(b,c)=d$ and $b=db'$, $c=dc'$,    then
   \[
   g(a,b,c)=d\cdot g(a,b',c')+a(d-1).
   \]
\end{theorem}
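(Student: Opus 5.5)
We prove Johnson's formula by describing the semigroup $S=\langle a,b,c\rangle$ through $S'=\langle a,b',c'\rangle$. First note that $\gcd(a,d)=1$, since any common divisor of $a$ and $d$ divides $a$, $b$ and $c$. Write $T=\langle b',c'\rangle$, so that $\langle b,c\rangle=dT$.

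\textbf{Step 1: description of $S$.} An integer $n$ lies in $S$ if and only if $n=ax+dt$ with $x\ge 0$ and $t\in T$. Because $\gcd(a,d)=1$, the residue of $x$ modulo $d$ is determined by $n$. Let $x_0\in\{0,\dots,d-1\}$ be the unique solution of $ax_0\equiv n \pmod d$. Then every representation of $n$ has $x=x_0+jd$ for some $j\ge 0$. Hence
\[
n\in S \iff n\ge ax_0 \text{ and } \frac{n-ax_0}{d}\in\{aj+t : j\ge 0,\ t\in T\}=S'.
\]
This gives a bijection: each $n$ is uniquely of the form $ax_0+dm$ with $0\le x_0\le d-1$ and $m\in\mathbb Z$, and $n\in S$ exactly when $m\in S'$.

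\textbf{Step 2: compute the maximum gap.} By Step 1, the complement of $S$ is the set of numbers $ax_0+dm$ with $0\le x_0\le d-1$ and $m\notin S'$. For fixed $x_0$, the largest such number is obtained at $m=g(a,b',c')$, since $S'$ contains every integer larger than $g(a,b',c')$. Maximising over $x_0$ then takes $x_0=d-1$. This gives
\[
g(a,b,c)=d\,g(a,b',c')+a(d-1).
\]

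\textbf{Degenerate case.} We should check what happens when $S'=\mathbb N_0$, i.e.\ when $1\in\{a,b',c'\}$. In that case we use the convention $g=-1$ and the same argument still applies, since $m=-1\notin S'$. Under the hypotheses $a\ge 2$ and $d>1$ we have $b'<c'$, and $b'=1$ is possible; the convention $g(\cdot)=-1$ handles this.

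The only real point is the uniqueness of $x_0$ modulo $d$, which needs $\gcd(a,d)=1$. This follows from $\gcd(a,b,c)=1$, so I expect no serious obstacle.
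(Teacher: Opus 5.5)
Your argument is correct. The paper itself gives no proof of this statement: it cites Johnson's result and uses it only to justify the standing assumption that $a,b,c$ are pairwise coprime.

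What you supply is the classical self-contained residue-class argument. Since $\gcd(a,d)=1$, every integer is uniquely of the form $ax_0+dm$ with $0\le x_0\le d-1$. Membership in $\langle a,b,c\rangle$ is then equivalent to $m\in\langle a,b',c'\rangle$, and the Frobenius number is obtained by taking $x_0=d-1$ and $m$ equal to the largest gap of $\langle a,b',c'\rangle$.

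Compared with citing, this buys self-containment and extra generality. Nothing in your Step 1 uses that $T$ has exactly two generators, so the same lines prove the general reduction $g(a,a_2,\dots,a_n)=d\,g(a,a_2/d,\dots,a_n/d)+a(d-1)$ with $d=\gcd(a_2,\dots,a_n)$.

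Your handling of the degenerate case also improves on the statement as written. With the paper's definition of $g$ as a maximum over $\mathbb{N}$, $g(a,b',c')$ is undefined whenever $b\mid c$; for example, $a=3$, $b=4$, $c=8$ gives $b'=1$. The convention $g=-1$ is exactly what makes the formula hold there: $4\cdot(-1)+3\cdot 3=5=g(3,4,8)$.

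Three small points to tidy:
\begin{enumerate}
\item[(i)] State explicitly that $\gcd(a,b',c')=1$, since any common divisor of $a,b',c'$ divides $a,b,c$. This is what guarantees that $\langle a,b',c'\rangle$ contains all sufficiently large integers, so that $g(a,b',c')$ exists.
\item[(ii)] Your Step 2 computes $\max(\mathbb{Z}\setminus S)$, whereas the paper defines $g$ as $\max(\mathbb{N}\setminus S)$. These coincide because $1\notin S$ when $a,b,c\ge 2$; say so.
\item[(iii)] Your aside that $b'<c'$ does not follow from the hypotheses as stated, which do not assume $b<c$. It plays no role in the argument and can be dropped.
\end{enumerate}
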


We recall the following notation from \cite{Tripathi}: 
\begin{notation}\label{not:k-l}
Let $c>b>a\ge 2$ be pairwise coprime integers. Define
\[
k:=\left\lfloor \frac{c}{b}\right\rfloor,\qquad \ell :=cb^{-1} \pmod a,
\]
where, in this paper, whenever we reduce a number $z$ modulo $n$ we always assume that $z\in\{0,1,\ldots,n-1\}$. 
\end{notation}

\begin{remark}\label{rmk:c-bl}
As it was shown in \cite[Lem.~2]{Tripathi} that if $\ell\le k$, then $g(a,b,c)=g(a,b)$, from now onwards we will always assume that $\ell>k$. 
\end{remark}

We first record a simple observation for later use. Recall that our standing assumption for the rest of the paper is that $c>b>a\ge 2$ are pairwise coprime.

\begin{lemma}\label{lem:c-bl}
     Suppose $\ell>k$. Then $c-b\ell<0$.
\end{lemma}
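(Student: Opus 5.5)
Since $\ell \ge k+1$, we have $b\ell \ge b(k+1) = bk + b$. By definition $k=\lfloor c/b\rfloor$, so $c = bk + s$ with $0\le s\le b-1$, and in particular $c < b(k+1)$. Combining the two inequalities gives
\[
c - b\ell \le c - b(k+1) < 0,
\]
which is exactly the claim.

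The only facts used are the defining inequality $c<b(\lfloor c/b\rfloor+1)$ of the floor function and the integrality of $k$ and $\ell$, which turns the strict inequality $\ell>k$ into $\ell\ge k+1$. The congruence $\ell\equiv cb^{-1}\pmod a$ plays no role here; all we need is that $\ell$ is an integer.

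The hypothesis $b\nmid c$ is automatic, since $\gcd(b,c)=1$ and $b>1$, but it is not needed, because the inequality $c<b(k+1)$ holds in all cases. No step of this argument presents a real obstacle.
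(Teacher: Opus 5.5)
Your proof is correct and uses the same key fact as the paper, namely $bk\le c<b(k+1)$, i.e.\ no two distinct multiples of $b$ lie in $(c-b,c]$. The only difference is presentation: you argue directly from $\ell\ge k+1$, whereas the paper argues by contradiction and has to treat the case $c=b\ell$ separately, which your version makes unnecessary.
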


\begin{proof}
    Suppose for a contradiction that $c-b\ell\ge 0$. Certainly we cannot have $c-b\ell=0$, since then $\ell=k$. So suppose $c-b\ell>0$. Note that $c-b<bk\le c$, so we have $c>b\ell> bk>c-b$. However, there is no space strictly between $c-b$ and $c$ for two distinct multiples of $b$. Hence the result.
\end{proof}

Next we recall the following result from \cite{Tripathi}. First, for non-negative integers $x,y$, set $\mathbf{v}(x,y):=bx+cy$ and we refer to this as the $\mathbf{v}$-value of $(x,y)$. 

\begin{lemma}
    \cite[Lem.~3]{Tripathi} \label{lem3} Suppose $\ell>k$. Then
    \[
    g(a,b,c)=\max_{1\le x<a}\left\{\min_{0\le t<a} \mathbf{v}\big((x+(a-\ell)t) \bmod a, t\big)\right\}-a.
    \]
\end{lemma}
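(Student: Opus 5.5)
The plan is to use the standard residue-class (Apéry set) description of the Frobenius number, taken with respect to the smallest generator $a$. For each residue $x \bmod a$, let $N_x$ be the smallest element of $\{bX+cY : X,Y\ge 0\}$ that is congruent to $x$ modulo $a$. Since adding multiples of $a$ is free, an integer $n\equiv x \pmod a$ is representable by $a,b,c$ if and only if $n\ge N_x$. Hence
\[
g(a,b,c)=\max_{0\le x<a} N_x - a .
\]
The residue $x=0$ has $N_0=0$ and does not contribute (as $a\ge2$ and $\gcd$ conditions give some positive $N_x$). So the maximum may be taken over $1\le x<a$. It then remains to identify $N_x$ with the inner minimum in the statement.

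Fix $x$ with $1\le x<a$. Any pair $(X,Y)$ can be reduced modulo $a$ in each coordinate, because subtracting $a$ from $X$ or from $Y$ lowers $bX+cY$ by $ab$ or $ac$, both multiples of $a$. So the minimiser satisfies $0\le X,Y<a$, and we may parametrise by $t:=Y\in\{0,\dots,a-1\}$.

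For fixed $t$, the congruence $bX+ct\equiv x \pmod a$ has to be solved for $X$. Using $c\equiv b\ell \pmod a$ and $b$ invertible modulo $a$, it becomes
\[
X\equiv b^{-1}x-\ell t \pmod a .
\]
The smallest non-negative solution is $X=(b^{-1}x+(a-\ell)t)\bmod a$. The residue $x$ enters only through $b^{-1}x$, and $x\mapsto b^{-1}x \bmod a$ is a bijection of $\{1,\dots,a-1\}$. Therefore we can relabel $x':=b^{-1}x$, and the outer maximum over $x$ ranging over $1\le x<a$ is unchanged.

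Thus $N_x=\min_{0\le t<a}\mathbf{v}\big((x'+(a-\ell)t)\bmod a,\,t\big)$, which gives the formula. The main point needing care is the relabelling step and checking that no admissible $(X,Y)$ is missed by restricting to $0\le X,Y<a$. The hypothesis $\ell>k$ is not actually needed for this identity; it is presumably stated only because the case $\ell\le k$ was already handled in Remark~\ref{rmk:c-bl}.
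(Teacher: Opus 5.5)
Your proof is correct and complete. The steps are the Ap\'ery-set characterization $g=\max_x N_x-a$, the reduction to $0\le X,Y<a$, solving $X\equiv b^{-1}x+(a-\ell)t \pmod a$ via $c\equiv b\ell$, and the relabelling $x\mapsto b^{-1}x$, which is a bijection of $\{1,\dots,a-1\}$ since $\gcd(a,b)=1$. You are also right that $\ell>k$ plays no role.

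The paper gives no proof of this lemma; it quotes it from Tripathi. Your reasoning is the standard one, and it matches how the paper later uses the lemma: it identifies the inner minimum with $\mathbf{m}(bx)$, the least element of $\{bX+cY\}$ in the class $[bx]$.
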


This leads to the following definition from \cite{Tripathi}:
\begin{definition}
    \cite[Def.~1]{Tripathi}  Let $x\in\{0,1,\ldots,a-1\}$. 
    We say that $\mathbf{v}(x,y)$ is a \emph{local minimum} if
    \begin{align*}
    \mathbf{v}(x,y)&\le \min\left\{\mathbf{v}\big((x-\ell)\bmod a, \,y+1\big), \mathbf{v}\big((x+\ell)\bmod a, \,y-1\big)\right\}  \quad\text{when }y\in\{1,\ldots,a-2\},\\
    \mathbf{v}(x,y)&\le \mathbf{v}\big((x-\ell)\bmod a, \,y+1\big) \quad\text{when }y=0,\\
      \mathbf{v}(x,y)&\le \mathbf{v}\big((x+\ell)\bmod a, \,y-1\big) \quad\text{when }y=a-1.
    \end{align*}
Further, we say that two local minima, $\mathbf{v}(x,y)$ and $\mathbf{v}(x',y')$, are \emph{consecutive} if there is no local minimum $\mathbf{v}(\widetilde{x},\widetilde{y})$ with $y<\widetilde{y}<y'$.
\end{definition}

Certainly in light of Lemma~\ref{lem3}, we just need to study the local minima among the $\mathbf{v}$-values, as was done in~\cite[Lem.~5]{Tripathi}. However the statement in \cite[Lem.~5]{Tripathi} is not quite correct, so we provide the corrected version below together with some extra information:

\begin{lemma} \label{lem:5} 
 Suppose $\ell>k$, and let $\mathbf{v}(x,y)$ and $\mathbf{v}(x',y')$ be consecutive local minima. Then $x,x'<\min\{a-\ell,\ell\}$. 
   
   Furthermore, letting  $\rho_{x}\in\{0,1,\ldots,a-\ell-1\}$ be such that $\rho_x\equiv  x-\ell \bmod {(a-\ell)}$, we have
   \[
   \mathbf{v}(x',y')-\mathbf{v}(x,y)=\mathbf{v}\left( (\rho_x \bmod \ell)-x, \left\lceil \frac{\ell-x}{a-\ell}\right\rceil +   \left\lfloor \frac{\ell+\rho_x}{\ell}\right\rfloor\right).
   \] 
\end{lemma}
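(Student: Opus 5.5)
The plan is to analyse the sequence of $\mathbf{v}$-values along the orbit appearing in Lemma~\ref{lem3}. For fixed $x$, consider the points $P_t=(x_t,t)$ with $x_t=(x+(a-\ell)t)\bmod a=(x-\ell t)\bmod a$. Going from $t$ to $t+1$ there are two possibilities.
\begin{itemize}
\item If $x_t\ge \ell$, then $x_{t+1}=x_t-\ell$. The $\mathbf{v}$-value changes by $c-b\ell$, which is negative by Lemma~\ref{lem:c-bl}.
\item If $x_t<\ell$, then $x_{t+1}=x_t+(a-\ell)$. The $\mathbf{v}$-value changes by $b(a-\ell)+c>0$.
\end{itemize}
So the sequence of $\mathbf{v}$-values strictly decreases exactly at the steps leaving a point with first coordinate $\ge\ell$, and strictly increases otherwise. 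The two neighbours in the definition of a local minimum are precisely $P_{t-1}$ and $P_{t+1}$. Hence everything reduces to bookkeeping of this up/down pattern.

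\emph{Step 1: characterising local minima.} For an interior $y$, the condition $\mathbf{v}(x,y)\le\mathbf{v}(P_{y+1})$ holds iff $x<\ell$. The condition $\mathbf{v}(x,y)\le \mathbf{v}(P_{y-1})$ holds iff the previous step was a decrease, i.e.\ iff $(x+\ell)\bmod a\ge \ell$. The latter is equivalent to $x<a-\ell$: indeed, if $x<a-\ell$ the predecessor is $x+\ell\ge\ell$, while if $x\ge a-\ell$ the predecessor is $x+\ell-a<\ell$. This gives $x<\min\{a-\ell,\ell\}$ for local minima in the interior. The endpoints $y=0$ and $y=a-1$ need separate care. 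The natural fix is to note that the orbit has period $a$, since $\gcd(\ell,a)=1$, and read the boundary conditions cyclically. Alternatively, observe that for a pair of \emph{consecutive} minima the relevant one is reached from a decreasing step, so the same bound applies. I expect this boundary case to be the main (mostly notational) obstacle, and I would check it directly against the definition.

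\emph{Step 2: walking from one minimum to the next.} Start at a local minimum with $x<\min\{a-\ell,\ell\}$. Since $x<\ell$, the walk first increases, adding $a-\ell$ to the first coordinate. It does so exactly $m:=\lceil(\ell-x)/(a-\ell)\rceil$ times, until it reaches $w=x+m(a-\ell)\in[\ell,a-1]$. No local minimum can occur during these increases. The walk then decreases, subtracting $\ell$ repeatedly while the first coordinate is $\ge\ell$, and the first point with first coordinate $<\ell$ is the next local minimum. This point is automatically $<a-\ell$: already $w-\ell\le a-1-\ell$, so every value after the first decrease lies below $a-\ell$, and its predecessor step was a decrease.

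\emph{Step 3: identifying the endpoint.} The key observation is that $w-\ell\equiv x-\ell \pmod{a-\ell}$ and $0\le w-\ell\le a-\ell-1$, so $w-\ell=\rho_x$. The subsequent decreases subtract $\ell$ until the value drops below $\ell$. Hence $x'=\rho_x\bmod \ell$, and the number of decreasing steps is $1+\lfloor\rho_x/\ell\rfloor=\lfloor(\ell+\rho_x)/\ell\rfloor$. Therefore
\[
y'-y=\left\lceil \frac{\ell-x}{a-\ell}\right\rceil+\left\lfloor\frac{\ell+\rho_x}{\ell}\right\rfloor, \qquad x'-x=(\rho_x\bmod \ell)-x.
\]
Since $\mathbf{v}$ is linear, $\mathbf{v}(x',y')-\mathbf{v}(x,y)=\mathbf{v}(x'-x,y'-y)$, which is exactly the stated formula.
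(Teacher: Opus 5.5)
Your argument is the same as the paper's: follow the orbit for $\lceil(\ell-x)/(a-\ell)\rceil$ increasing steps to the turning point $\ell+\rho_x$, then take $\lfloor(\ell+\rho_x)/\ell\rfloor$ decreasing steps, and read off $x'-x$ and $y'-y$ by linearity of $\mathbf{v}$. Your explicit characterisation of interior minima as exactly those with $x<\min\{a-\ell,\ell\}$, and your identification $w-\ell=\rho_x$ by congruence plus range, are if anything more explicit than the paper's.

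The boundary worry you raise is genuine, and the paper's proof makes the same tacit assumption. However, neither of your patches proves the statement as literally worded, because the statement fails at the boundary. At $y=0$ only $x<\ell$ is forced, and your ``reached by a decreasing step'' argument does not apply to a first minimum there. For example, with $a=7$, $b=8$, $c=19$ (so $\ell=5$), the minima $\mathbf{v}(3,0)$ and $\mathbf{v}(0,2)$ are consecutive, yet $x=3\ge a-\ell$. At $y=a-1$ the descent can be cut short. With $a=11$, $b=15$, $c=16$, the minima $\mathbf{v}(2,8)=158$ and $\mathbf{v}(5,10)=235$ are consecutive, with $x'=5\ge\ell=4$ and difference $77\ne 33=\mathbf{v}(-1,3)$. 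So your cyclic reading should be adopted as a convention rather than proved: take local minima of the walk continued past $y=a-1$, and claim only $x<\ell$ at $y=0$. That is all your Steps 2--3 actually use.
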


\begin{proof}
We follow the set-up in \cite[Lem.~5.11]{Peter}. Starting from $\mathbf{v}(x,y)$, note that as we progress through consecutive $\mathbf{v}$-values, we increase the $x$-component by $a-\ell$, working modulo $a$, and we increase the $y$-component by 1. For convenience, we write 
\[
x_i=x+i(a-\ell) \pmod a
\]
for the $x$-component of the $i$th $\mathbf{v}$-value after $\mathbf{v}(x,y)$.
Observe that these consecutive $\mathbf{v}$-values are always increasing until the first time the $x$-component is greater or equal to $\ell$, after which the $\mathbf{v}$-value decreases. Indeed,  recalling that $\ell>k$, and in particular $b\ell >c$ by Lemma~\ref{lem:c-bl}, if we suppose that the first time  the $x$-component is greater or equal to $\ell$ happens at  $x_\iota$,  then 
\[
\mathbf{v}(x_{\iota+1}, y+\iota+1)-\mathbf{v}(x_{\iota}, y+\iota)=-b\ell +c< 0.
\]
We note also that as $\ell\le x_\iota\le a-1$, it follows that $0\le x_{\iota+1}\le a-\ell-1$. The first statement thus follows.

We compute that the number of consecutive increasing $\mathbf{v}$-values, starting from $\mathbf{v}(x,y)$,  is
$$\iota=\left\lceil\frac{\ell-x}{a-\ell}\right\rceil.
$$

If $x_{\iota+1}\ge \ell$, then $\mathbf{v}(x_{\iota+2}, y+\iota+2)$ will likewise be strictly smaller than $\mathbf{v}(x_{\iota+1}, y+\iota+1)$. We deduce that the number of consecutive decreases from $\mathbf{v}(x_{\iota}, y+\iota)$  is equal to 
$$\left\lfloor \frac{x_\iota}{\ell}\right\rfloor.$$
From the definition of $\rho_x$ we have
\begin{equation*}
    \label{eq:r_{x_0}}
\ell-x=\left\lceil\frac{\ell-x}{a-\ell}\right\rceil(a-\ell)-\rho_x
\end{equation*}
and so, as $x_{\iota}=x+\left\lceil\frac{\ell-x}{a-\ell}\right\rceil(a-\ell)$, we obtain
\[
x_{\iota}=\ell-x+\rho_x+x=\ell+\rho_x=\left\lfloor \frac{\ell+\rho_x}{\ell}\right\rfloor\ell+ (\rho_x \bmod \ell).
\]
Thus the number of consecutive decreases from $\mathbf{v}(x_{\iota}, y+\iota)$ is 
$$
\left\lfloor\frac{\ell+\rho_x}{\ell}\right\rfloor.
$$
Therefore, to prove the final statement, if the next local minimum is at $\mathbf{v}(x',y')$, we first note that 
\[
y'-y=\left\lceil \frac{\ell-x}{a-\ell}\right\rceil+\left\lfloor\frac{\ell+\rho_x}{\ell}\right\rfloor.
\]
Similarly, from the above we compute that 
\begin{align*}
x'-x&=\left\lceil \frac{\ell-x}{a-\ell}\right\rceil(a-\ell)-\left\lfloor\frac{\ell+\rho_x}{\ell}\right\rfloor\ell\\
&=\ell-x+\rho_x+(\rho_x \bmod \ell)-\ell-\rho_x\\
&=(\rho_x\bmod \ell) -x,
\end{align*}
as required.
\end{proof}

We remark that the statement in \cite[Lem.~5.11]{Peter} is different from the lemma above, as \cite[Lem.~5.11]{Peter} still had some small inconsistencies that needed to be dealt with.

\begin{remark}
    From \cite[Lem.~4]{Tripathi} and since $b\ell\equiv c\pmod a$, it follows that the difference between consecutive $\mathbf{v}$-values is a non-zero multiple of~$a$. Hence, starting from, and including, $\mathbf{v}(x,0)$, all consecutive  $\mathbf{v}$-values lie in $[bx]$, the congruence class of $bx$ modulo~$a$.
\end{remark}

We recall here Lemma~7 of \cite{Tripathi}. Although the proof of this lemma quotes the incorrect Lemma~5 of \cite{Tripathi}, the statement still holds. For completeness, we give an amended proof, where we first recall some notation from \cite{Tripathi}.

\begin{notation}
    \label{not:m}
For $x\in\mathbb{N}$ we denote by $$\mathbf{m}(x)=\min \{z\in \mathbb{N}
\mid z=bn_1+cn_2 \text{ for some }n_1,n_2\in\mathbb{N}\cup\{0\}\quad\text{and}\quad z\equiv x \pmod a\}.
$$
Also
\[
q:=\left\lfloor\frac{a}{a-\ell}\right\rfloor,\quad r:=a-q(a-\ell).
\] 
\end{notation}

\begin{remark}
Note also that $r\equiv a \bmod {(a-\ell)}$ and $r\equiv \ell \bmod {(a-\ell)}$; indeed, it follows from the equality $a=\ell+a-\ell$. Furthermore $r\le \ell$.
\end{remark}

We could also assume that $r>0$, since the case $r=0$ is equivalent to having $\ell=a-1$, and this case was settled by Brauer and Shockley \cite[Pg.~220]{Brauer}, as was remarked in \cite[Rmk.~2]{Tripathi}. However the proofs in this paper also work for the case $r=0$, so there is no need to exclude this case.

\begin{lemma} \cite[Lem.~7]{Tripathi}
\label{lem:7} Suppose $\ell>k$. For $x\in\{1,\ldots, a-1\}$,  $$\mathbf{m}(bx)=\min\{bx,\mathbf{m}(bx')+cy'\},$$ where
    \[
    (x',y')=\begin{cases}
        \big(x \bmod {(a-\ell)} -r+a-\ell,\, q-\lfloor \tfrac{x}{a-\ell}\rfloor+1\big) & \text{if }0\le x\bmod {(a-\ell)} \le r-1;\\
         \big(x \bmod {(a-\ell)}-r,\, q-\lfloor \tfrac{x}{a-\ell}\rfloor\big) & \text{if }r\le x\bmod {(a-\ell)} \le a-\ell.
    \end{cases}
    \]
\end{lemma}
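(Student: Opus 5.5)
The plan is to reduce $\mathbf{m}(bx)$ to a minimum over a single walk of $\mathbf{v}$-values, exactly as in the proof of Lemma~\ref{lem:5}, and to cut that walk at its first ``wrap-around''.

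\emph{Step 1: rewrite $\mathbf{m}(bx)$ as a minimum over one walk.} Since $c\equiv b\ell \pmod a$, we have $bn_1+cn_2\equiv b(n_1+\ell n_2)\pmod a$. As $\gcd(a,b)=1$, the congruence $bn_1+cn_2\equiv bx$ is equivalent to $n_1\equiv x+(a-\ell)n_2 \pmod a$. For fixed $n_2=t$, the smallest admissible $n_1$ is $(x+(a-\ell)t)\bmod a$. Hence
\[
\mathbf{m}(bx)=\min_{t\ge 0}\mathbf{v}\big((x+(a-\ell)t)\bmod a,\,t\big).
\]
The $x$-component is periodic in $t$ with period $a$, while the $y$-component grows. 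So the minimum may be taken over $t\ge 0$ or over $0\le t<a$ indifferently. I will work with all $t\ge0$, which makes the shift argument in Step~3 clean. The term $t=0$ contributes $bx$.

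\emph{Step 2: monotonicity between wraps.} Write $x_t=(x+(a-\ell)t)\bmod a$. If no reduction modulo $a$ occurs between $t$ and $t+1$, then $\mathbf{v}(x_{t+1},t+1)-\mathbf{v}(x_t,t)=b(a-\ell)+c>0$. Otherwise the difference is $c-b\ell$, which is negative by Lemma~\ref{lem:c-bl}. Let $t_0\ge1$ be the first index at which a wrap occurs. Then for $1\le t<t_0$ we have $\mathbf{v}(x_t,t)>bx$, so these terms never realise the minimum.

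Next compute $t_0$ and $x_{t_0}$. Write $x=s(a-\ell)+\rho$ with $\rho=x\bmod(a-\ell)$ and $s=\lfloor x/(a-\ell)\rfloor$, and recall $a=q(a-\ell)+r$.
\begin{itemize}
\item If $\rho\ge r$, then $x+(q-s)(a-\ell)=a+(\rho-r)\ge a$, while one step fewer stays below $a$. So $t_0=q-s$ and $x_{t_0}=\rho-r$.
\item If $\rho\le r-1$, then $x+(q-s)(a-\ell)<a$, so one more step is needed. This gives $t_0=q-s+1$ and $x_{t_0}=\rho-r+a-\ell$.
\end{itemize}
These are exactly the pairs $(x',y')=(x_{t_0},t_0)$ in the statement. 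In the boundary case $\rho=r$ one gets $x'=0$, where the convention that $\mathbf{m}(0)$ is the minimum over the same walk (namely $0$) should be adopted.

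\emph{Step 3: the tail is a shifted copy.} For $t\ge t_0$,
\[
\mathbf{v}(x_t,t)=cy'+\mathbf{v}\big((x'+(a-\ell)(t-t_0))\bmod a,\,t-t_0\big).
\]
By Step~1 applied to $x'$, the minimum of this tail is $cy'+\mathbf{m}(bx')$. Combining the three ranges $t=0$, $1\le t<t_0$ and $t\ge t_0$ gives
\[
\mathbf{m}(bx)=\min\{bx,\ \mathbf{m}(bx')+cy'\}.
\]

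I expect the main obstacle to be the bookkeeping in Step~2. This means pinning down $t_0$ exactly in the two cases, including the boundary cases $\rho=r$ and $\rho=0$, and checking that $x'$ lies in $\{0,\dots,a-1\}$ so that $\mathbf{m}(bx')$ is the intended quantity. A secondary point is the justification that the minimum over $t\ge0$ agrees with that over $0\le t<a$ (so that it matches Lemma~\ref{lem3}); the periodicity argument in Step~1 handles this.
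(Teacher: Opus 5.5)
Your proof is correct and follows the same route as the paper. The walk from $\mathbf{v}(x,0)$ increases strictly until its first wrap-around at $\mathbf{v}(x',y')$, and the rest of the walk is $cy'$ plus the walk from $\mathbf{v}(x',0)$, whose minimum is $\mathbf{m}(bx')$. Your explicit computation of $t_0$, the use of all $t\ge 0$, and the convention $\mathbf{m}(0)=0$ for the boundary case $x \bmod (a-\ell)=r$ make precise details that the paper leaves implicit.
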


\begin{proof}
    As observed in the proof of the previous result, the first decrease in the $\mathbf{v}$-values, starting from $\mathbf{v}(x,0)$, is at $\mathbf{v}(x',y')$, for $(x',y')$ given as in the above statement. Hence, the minimum of the consecutive $\mathbf{v}$-values starting from $\mathbf{v}(x,0)$, is either $\mathbf{v}(x,0)$ or the minimum among all consecutive $\mathbf{v}$-values starting from $\mathbf{v}(x',y')$. As all consecutive $\mathbf{v}$-values starting from $\mathbf{v}(x',y')$ can also be respectively expressed as the sum of $cy'$ with all consecutive $\mathbf{v}$-values starting from $\mathbf{v}(x',0)$,  the result now follows, noting that the minimum among all consecutive $\mathbf{v}$-values starting from $\mathbf{v}(x',0)$ is denoted by $\mathbf{m}(bx')$.
\end{proof}

Of course, with Lemma~\ref{lem:5}, i.e. the corrected version of \cite[Lem.~5]{Tripathi}, we could also strengthen the previous lemma by replacing $(x',y')$ with the coordinates of the next local minimum after $\mathbf{v}(x,0)$. However, this is not necessary for the purpose of proving the main result.

\smallskip
 
To proceed, we now split into two cases, when firstly $br<cq$ (Section~\ref{sec:less-than}) and secondly when $br>cq$ (Section~\ref{sec:greater-than}). Note that $br\ne cq$, as remarked in \cite[Subsec.~3.5]{Tripathi}. Indeed, we have $\text{gcd}(b,c)=1$ and $r<a-\ell<a<c$.

\section{The case $br< cq$}\label{sec:less-than}

We first document a useful result.

\begin{lemma}
    \label{lem:minima-increase}
    Let $\mathbf{v}(x,y)$ and $\mathbf{v}(x',y')$ be consecutive local minima, with     $y<y'$. If $\ell>k$ and $br<cq$, then $a-\ell\le \ell$ and $\mathbf{v}(x',y')>\mathbf{v}(x,y)$.
\end{lemma}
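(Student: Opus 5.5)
The plan is to prove the two assertions separately, and to get the second one directly from the explicit difference formula of Lemma~\ref{lem:5}.

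\emph{Step 1: $a-\ell\le\ell$.} I argue by contradiction. Suppose $a-\ell>\ell$. Then $a-\ell>a/2$, so $q=\lfloor a/(a-\ell)\rfloor=1$ and $r=a-(a-\ell)=\ell$. Hence $br=b\ell$ and $cq=c$. By Lemma~\ref{lem:c-bl}, $b\ell>c$, so $br>cq$. This contradicts the hypothesis $br<cq$.

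\emph{Step 2: simplify the difference formula.} Let $\mathbf{v}(x,y)$ and $\mathbf{v}(x',y')$ be consecutive local minima, and write $\iota=\lceil(\ell-x)/(a-\ell)\rceil$.
\begin{itemize}
\item Since $0\le\rho_x\le a-\ell-1<\ell$ by Step~1, we have $\rho_x\bmod\ell=\rho_x$ and $\lfloor(\ell+\rho_x)/\ell\rfloor=1$.
\item Lemma~\ref{lem:5} therefore gives $\mathbf{v}(x',y')-\mathbf{v}(x,y)=b(\rho_x-x)+c(\iota+1)$.
\item The relation $\ell-x=\iota(a-\ell)-\rho_x$ from the proof of Lemma~\ref{lem:5} gives $\rho_x-x=\iota(a-\ell)-\ell$.
\item Substituting $\ell=(q-1)(a-\ell)+r$, which follows from $a=q(a-\ell)+r$, I obtain
\[
\mathbf{v}(x',y')-\mathbf{v}(x,y)=b\big((\iota-q+1)(a-\ell)-r\big)+c(\iota+1).
\]
\end{itemize}

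\emph{Step 3: bound $\iota$ from below.} By Lemma~\ref{lem:5}, $x<\min\{a-\ell,\ell\}$, so $x\le a-\ell-1$. Then
\[
\ell-x\ge \ell-(a-\ell)+1=(q-2)(a-\ell)+r+1>(q-2)(a-\ell).
\]
It follows that $\iota\ge q-1$. (If $q=1$ this bound is trivial, but $q\ge2$ anyway by Step~1.)

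\emph{Step 4: conclude by cases.}
\begin{itemize}
\item If $\iota\ge q$, the $b$-term is at least $b\big((a-\ell)-r\big)>0$, because $r<a-\ell$. The $c$-term is also positive, so the difference is positive.
\item If $\iota=q-1$, the difference is exactly $-br+cq$, which is positive by the hypothesis $br<cq$.
\end{itemize}

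The only delicate point is Step~3. It needs the precise bound $x\le a-\ell-1$ from Lemma~\ref{lem:5} and the rewriting of $\ell$ in terms of $q$ and $r$, which together place $\iota$ in exactly these two cases. Once that is in hand, the case $\iota=q-1$ is where the hypothesis $br<cq$ enters.
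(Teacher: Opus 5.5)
Your proof is correct given the paper's Lemma~\ref{lem:5}, and it is essentially the paper's argument. Both show within the framework of Lemma~\ref{lem:5} that only one decreasing step occurs; you get this more directly from $\rho_x<a-\ell\le\ell$. The difference is then $b\big(\iota(a-\ell)-\ell\big)+c(\iota+1)$ with $\iota\ge q-1$, which is positive by $r<a-\ell$ when $\iota\ge q$ and by $br<cq$ when $\iota=q-1$; the paper instead pins down $\iota\in\{q-1,q\}$ according to $x\ge r$ or $x<r$.

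One caveat applies equally to the paper's proof. The input $x\le a-\ell-1$ from Lemma~\ref{lem:5} holds only for local minima reached by a decreasing step ($y\ge 1$). By contrast, $\mathbf{v}(x,0)$ is a local minimum whenever $x<\ell$, and then the conclusion can fail. For example, $(a,b,c)=(7,11,13)$ gives $\ell=5$, $q=3$, $r=1$, with consecutive local minima $\mathbf{v}(4,0)=44>\mathbf{v}(1,2)=37$. So the lemma should be read with the hypothesis $x<a-\ell$, as in Tripathi's original formulation.
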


\begin{proof}
   First note that the assumption $br<cq$ forces $r<\ell$. Indeed, as $r\le \ell$, if $r=\ell$ then $q=1$ and so $br=b\ell<c$, contradicting Lemma~\ref{lem:c-bl}. Then, as $r\equiv \ell \bmod {(a-\ell)}$, it follows that $r\le \ell -(a-\ell)$. In particular, this implies that $a-\ell\le \ell$.  Indeed, if $\ell<a-\ell$, as $r\equiv \ell\bmod {(a-\ell)}$ and also $r<a-\ell$, we would then have $r=\ell$, a contradiction to the above.
   
   In the notation of the proof of Lemma~\ref{lem:5}, observe that the number of increasing $\mathbf{v}$-values, starting from $\mathbf{v}(x,y)$, is either $q$ or $q-1$, i.e.
   \[
\iota=\left\lceil\frac{\ell-x}{a-\ell}\right\rceil=\begin{cases}
    q & \text{if }x<r,\\
      q-1 & \text{if }x\ge r.
\end{cases}
   \]
   Thereafter, the number of decreasing $\mathbf{v}$-values before the local minimum $\mathbf{v}(x',y')$ is reached, is
$$
\left\lfloor\frac{\ell+\rho_x}{\ell}\right\rfloor,
$$
where $\rho_x\equiv x-\ell \bmod {(a-\ell)}$. Recall that
\[
\ell-x=\left\lceil \frac{\ell-x}{a-\ell}\right\rceil(a-\ell)-\rho_x\]
and so
\[
\ell+\rho_x=\begin{cases}
    q(a-\ell)+x & \text{if }x<r,\\
      (q-1)(a-\ell)+x & \text{if }x\ge r.
\end{cases}
\]
Observe also that
\[
(q-1)(a-\ell)\le \ell< q(a-\ell)\le 2\ell;
\]
indeed, if $q(a-\ell)\le \ell$, then $(q+1)(a-\ell)\le a$, contradicting the definition of $q$. Therefore, for the case $x \geq r$, clearly we have
\[
\left\lfloor \frac{(q-1)(a-\ell)+x}{\ell}\right\rfloor=1
\]
since $x<a-\ell$ by Lemma~\ref{lem:5}. For the case $x<r$, 
we claim that $q(a-\ell)+r\le 2\ell$. Indeed, if $a=q(a-\ell)+r> 2\ell$, then $a-\ell> \ell$, contrary to what we established above. Hence, 
\begin{align*}
\mathbf{v}(x',y')-\mathbf{v}(x,y)&=\begin{cases}
    bq(a-\ell)-b\ell +c(q+1) & \text{if }x<r,\\
      b(q-1)(a-\ell)-b\ell +cq & \text{if }x\ge r,
      \end{cases}\\
      &=\begin{cases}
    b(a-r)-b\ell +c(q+1) & \text{if }x<r,\\
      b(a-r-a+\ell)-b\ell +cq & \text{if }x\ge r,
\end{cases}\\
 &=\begin{cases}
    b(a-\ell) +c-br+cq & \text{if }x<r,\\
      -br+cq & \text{if }x\ge r,
\end{cases}\\
&>0,
\end{align*}
as required.
\end{proof}

\begin{remark}\label{rmk:failure-vs-truth}
    As seen in the proof above, since there is only one decreasing step among the consecutive $\mathbf{v}$-values considered, \cite[Lem.~5]{Tripathi} holds in this case $br<cq$. Therefore \cite[Lem.~5]{Tripathi} is only inconsistent in the case $br>cq$. Indeed, the original formulation in \cite[Lem.~5]{Tripathi} is as follows: \textit{for $\mathbf{v}(x,y)$ and $\mathbf{v}(x',y')$ consecutive local minima, with $0\le x,x'<a-\ell$  (and such that  $y<y'$),}
    \[
    \mathbf{v}(x',y')-\mathbf{v}(x,y)=\begin{cases}
        \mathbf{v}(a-\ell-r,q+1) &\textup{if }0\le x<r,\\
         \mathbf{v}(-r,q) &\textup{if }r\le x<a-\ell.
    \end{cases}
    \]
    As was given in \cite[Example~5.13]{Peter}, upon taking $a=11$, $b=15$ and $c=16$, we have $q=1$ and $\ell=r=4$. Hence the statement in \cite[Lem.~5]{Tripathi} says that the next local minimum after $\mathbf{v}(1,0)=15$ should be $\mathbf{v}(4,2)=92$, however the $\mathbf{v}$-value after $\mathbf{v}(4,2)$, that is $\mathbf{v}(0,3)=48$, is the actual local minimum. As remarked earlier, we confirm here that $br=60 > cq=16$.
\end{remark}

As with \cite[Lem.~7]{Tripathi}, the statement of \cite[Thm.~3]{Tripathi} still holds. The proof is the same as in \cite{Tripathi}, only we replace the reference to  \cite[Lem.~5]{Tripathi} with Lemma~\ref{lem:minima-increase}. For convenience, we include the full proof here, and we have also added some extra justification for clarity. 

\begin{theorem}\cite[Thm.~3]{Tripathi} 
\label{thm:3} If $\ell >k$ and $br<cq$, then writing $\lambda:=\lfloor\tfrac{cq-br}{b(a-\ell)+c}\rfloor$,
\[
g(a,b,c)+a=\begin{cases}
    b\big((\lambda+1)(a-\ell)+r-1\big) &\text{if }\lambda\ge\frac{c(q-1)-br}{b(a-\ell)+c};\\
    b(a-\ell-1)+c(q-\lambda-1) &\text{if }\lambda\le\frac{c(q-1)-br}{b(a-\ell)+c}.
\end{cases}
\]
\end{theorem}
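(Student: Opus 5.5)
The plan is to use Lemma~\ref{lem3} together with the observation that, for each starting point $x$, the minimum over the $\mathbf{v}$-values $\mathbf{v}\big((x+(a-\ell)t)\bmod a,t\big)$, $0\le t<a$, is controlled by the local minima. Since $br<cq$, Lemma~\ref{lem:minima-increase} says that consecutive local minima strictly increase. So along any run of consecutive $\mathbf{v}$-values starting from $\mathbf{v}(x,0)$, the minimum is either $\mathbf{v}(x,0)=bx$ or the \emph{first} local minimum reached. This is exactly the content of Lemma~\ref{lem:7}, now with no further recursion needed.

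\textbf{Step 1: the chain of local minima from $\mathbf{v}(0,0)=0$.} From the proof of Lemma~\ref{lem:minima-increase}, the successor of a local minimum at $x<r$ sits at $x'=x+a-\ell-r$ and has value larger by $D_1:=b(a-\ell)+c+(cq-br)$. The successor of a local minimum at $x\ge r$ sits at $x'=x-r$ and has value larger by $D_2:=cq-br$. Starting at $x=0$, the chain therefore visits $x=a-\ell-r$, then $a-\ell-2r,\dots$, decreasing by $r$ with increments $D_2$, until the $x$-coordinate drops below $r$, and then jumps by $D_1$. I would record the local minima explicitly as points $(x,y)$ with values $\mathbf{v}(x,y)$ expressed through $D_1,D_2$.

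\textbf{Step 2: reduce to two candidate residues.} Using Lemma~\ref{lem:7}, $\mathbf{m}(bx)=\min\{bx,\ \text{first local minimum after }\mathbf{v}(x,0)\}$. I would then show that $\mathbf{m}(bx)$, as a function of $x$, looks like a staircase. It equals $bx$ for $x$ up to some threshold $X$; beyond that it equals $bx'+cy'$ coming from a local minimum. The threshold is the first $x$ where $bx$ exceeds the value of the corresponding local minimum, and this comparison is exactly $cq-br$ against multiples of $b(a-\ell)+c$. Concretely, the local minimum reached from $x=j(a-\ell)+s$ is cheaper than $bx$ once $j$ exceeds $\lambda=\lfloor (cq-br)/(b(a-\ell)+c)\rfloor$. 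This yields the threshold $X=(\lambda+1)(a-\ell)+r$, and the largest pure-$b$ element is $b(X-1)=b\big((\lambda+1)(a-\ell)+r-1\big)$. The other candidate for the maximum is the largest element of the $y>0$ part of the staircase. By the same bookkeeping this sits at $x=a-\ell-1$ with $y=q-\lambda-1$, giving $b(a-\ell-1)+c(q-\lambda-1)$. Lemma~\ref{lem3} then gives $g(a,b,c)+a=\max$ of these two.

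\textbf{Step 3: compare the candidates.} The difference is
\[
b\big((\lambda+1)(a-\ell)+r-1\big)-b(a-\ell-1)-c(q-\lambda-1)=\lambda\big(b(a-\ell)+c\big)-\big(c(q-1)-br\big).
\]
Its sign is governed precisely by the inequality $\lambda\gtrless \frac{c(q-1)-br}{b(a-\ell)+c}$, which gives the two cases of the statement (they agree at equality).

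I expect Step 2 to be the main obstacle. One must verify carefully that every residue class outside the two corners has $\mathbf{m}$-value at most one of the candidates. This includes handling the wrap-around when $x<r$, where the jump is $D_1$ rather than $D_2$, and confirming the index $q-\lambda-1$ exactly. I would first try to argue this by an explicit description of the Apéry-type set $\{(x,y):\mathbf{m}(bx+cy)=bx+cy\}$ as a rectangle $[0,X)\times\{0\}$ together with $[0,a-\ell)\times[1,q-\lambda)$. I would check membership via the increase of local minima, and that all $a$ residues are covered exactly once.
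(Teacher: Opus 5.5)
Your proposal is correct and is essentially the paper's argument: Lemma~\ref{lem:7} combined with Lemma~\ref{lem:minima-increase} gives $\mathbf{m}(bx)=\min\{bx,\,bx'+cy'\}$, and the comparison reduces to $m-\epsilon\le\lambda$. This yields your threshold $X$ and the two candidates, which you then compare exactly as the paper does. Your L-shaped description of the minimal representatives repackages the paper's ``fix $s$, vary $m$'' bookkeeping.

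One small correction: in Step~2 the local minimum beats $bx$ only once $j>\lambda+\epsilon$, where $\epsilon=1$ if $s<r$ and $\epsilon=0$ otherwise, not once $j>\lambda$. Your value $X=(\lambda+1)(a-\ell)+r$ already encodes this. Step~1 is not needed.
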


\begin{proof}     
In light of Lemma~\ref{lem:minima-increase}, it follows that  $\mathbf{m}(bx)=bx$ for $x\in\{1,\ldots,a-1\}$. Then from Lemma~\ref{lem:7} and using its notation, we have $\mathbf{m}(bx)=\min\{bx,bx'+cy'\}$ for $x\in\{1,\ldots,a-1\}$.

    We fix some $x\in\{1,\ldots,a-1\}$, and write $m:=\lfloor\tfrac{x}{a-\ell}\rfloor$ and $s:=x\bmod (a-\ell)$. We set $\epsilon$ to be 0 or 1 according to whether $s\ge r$ or $s<r$ respectively. From Lemma~\ref{lem:7}  we deduce that $\mathbf{m}(bx)=bx$ if and only if
    \begin{equation}\label{eq:lambda}
    \begin{split}
     &  \qquad\qquad b\big(m(a-\ell)+s\big)=bx\le bx'+cy'=b\big(s-r+\epsilon(a-\ell)\big)+c\big(q-(m-\epsilon)\big)\\
     &\iff \quad b\big((m-\epsilon)(a-\ell)+r\big) \le c\big(q-(m-\epsilon)\big)\\
      &\iff \quad (m-\epsilon)\big(b(a-\ell)+c\big) \le cq-br\\
          &\iff \qquad\qquad \qquad\qquad \,\,\quad m\le \lambda+\epsilon.
    \end{split}
    \end{equation}
    Thus for fixed $s\in\{0,1,\ldots,a-\ell-1\}$, we assert that
    \[
    \max_{m} \mathbf{m}(bx)=\max\left\{b\big((\lambda+\epsilon)(a-\ell)+s\big),\, b\big(\epsilon(a-\ell)+s-r\big)+c(q-\lambda-1)\right\}.
    \]
    Indeed, for $m\in\{0,1,\ldots,\lambda+\epsilon\}$, the above equivalences yield that $\mathbf{m}(bx)=bx$, and the maximum of these is when $m=\lambda+\epsilon$. For $m\ge \lambda+\epsilon+1$, we have $\mathbf{m}(bx)=bx'+cy'$. As $x'$ does not depend on $m$ and as $y'=q-(m-\epsilon)$, we see that as $m$ increases, the corresponding value $bx'+cy'$ decreases. Hence the maximum of these values occurs when $m-\epsilon=\lambda+1$.

    Now note that the difference between the above two terms,
    \begin{equation}\label{eq:difference}
    \begin{split}
       &\left[b\big(\epsilon(a-\ell)+s-r\big)+c(q-\lambda-1)\right]-b\big((\lambda+\epsilon)(a-\ell)+s\big)\\
    &\qquad\qquad\qquad\qquad\qquad\qquad\qquad\qquad\qquad\qquad=c(q-1)-br-\lambda\big(b(a-\ell)+c\big),
    \end{split}
    \end{equation}
     is independent of $s$ and $\epsilon$. Note also that
    $$b\big(\lambda(a-\ell)+a-\ell-1\big) \le b\big((\lambda+1)(a-\ell)+r-1\big).
    $$
    Hence, we claim that if the difference in \eqref{eq:difference} is non-positive, i.e. if $\lambda\ge\frac{c(q-1)-br}{b(a-\ell)+c}$, then
    \[
    \max_{s} \left\{\max_m\mathbf{m}(bx)\right\}=b\big((\lambda+1)(a-\ell)+r-1\big),
    \]
    and  if the difference in \eqref{eq:difference} is non-negative, i.e. if $\lambda\le\frac{c(q-1)-br}{b(a-\ell)+c}$, then
    \[
    \max_{s} \left\{\max_m\mathbf{m}(bx)\right\}=b(a-\ell-1)+c(q-\lambda-1);
    \]
    indeed, the maximum among $b\big(a-\ell+s-r\big)$ for $s<r$ occurs when $s=r-1$, and the maximum among $b\big(s-r\big)$ for $s\ge r \ge 0$ occurs when $s-r=a-\ell -1$.

   Lastly, observe that $b\big((\lambda+1)(a-\ell)+r-1\big)=b(a-\ell-1)+c(q-\lambda-1)$ if and only if $\lambda\big(b(a-\ell)+c\big)=c(q-1)-br$, so the constraints on $\lambda$ in the statement of the theorem are justified.
 \end{proof}

As mentioned in the introduction,  Theorem 4  of \cite{Tripathi} can be proved with similar adjustments.

\section{The case $br> cq$}\label{sec:greater-than}

We next turn to Lemma~9 of~\cite{Tripathi}. The proof is again the same as in \cite{Tripathi}, only we need to slightly rephrase any references to \cite[Lem.~5]{Tripathi}. For convenience, as before we include the full proof here, and we also provide more justification for certain claims. Recall also, that when considering consecutive $\mathbf{v}$-values, this is equivalent to the change from a coordinate $(x,y)$ to $(x+a-\ell,y+1)$. This change in coordinates is referred to as  a \emph{step} of size~1.
 
For brevity, we shall also write $$u:=(a-\ell) \bmod r.$$ Note that in particular $r>0$ as $br>cq$.
\begin{lemma}\cite[Lem.~9]{Tripathi} \label{lem:9}
If $\ell>k$ and $br>cq$, then
\[
\max_{0\le x<a} \mathbf{m}(bx)=\max \left\{\max_{0\le x<u}\mathbf{m}(bx)+cq,\, \max_{u\le x<r}\mathbf{m}(bx)\right\}+cq\left\lfloor\frac{a-\ell-1}{r}\right\rfloor.
\]
\end{lemma}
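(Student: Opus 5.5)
The plan is to use Lemma~\ref{lem:7} repeatedly and track how $\mathbf{m}(bx)$ changes as $x$ drops by multiples of $r$. The key observation is that when $br>cq$, moving from $x$ to $x-r$ is advantageous. In the second case of Lemma~\ref{lem:7}, when $x<a-\ell$ and $x\ge r$, we have $(x',y')=(x-r,q)$. This means $\mathbf{v}(x,0)=bx > b(x-r)+cq=\mathbf{v}(x-r,q)$, since $br>cq$.

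\textbf{Step 1: a recursion on $[0,a-\ell)$.} I first aim to show that for $r\le x<a-\ell$,
\[
\mathbf{m}(bx)=\mathbf{m}(b(x-r))+cq .
\]
By Lemma~\ref{lem:7}, $\mathbf{m}(bx)=\min\{bx,\mathbf{m}(b(x-r))+cq\}$. The right-hand term is at most $b(x-r)+cq<bx$, so the minimum is attained by the second term. Iterating, every $x<a-\ell$ with $x=jr+x_0$, $0\le x_0<r$, satisfies $\mathbf{m}(bx)=\mathbf{m}(bx_0)+jcq$.

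\textbf{Step 2: reduce all $x$ to $x<a-\ell$.} For $x\ge a-\ell$, the walk from $\mathbf{v}(x,0)$ increases until its first drop, which lands at $\mathbf{v}(x',y')$ with $x'<a-\ell$. Hence $\mathbf{m}(bx)=\min\{bx,\mathbf{m}(bx')+cy'\}$, with $\mathbf{m}(bx')$ controlled by Step 1. I expect the maximum over all $x$ to be attained already in the range $x<a-\ell$, or at least to be bounded by the same expression. The reason is that the chain of $\mathbf{v}$-values starting at $\mathbf{v}(x,0)$ with $x\ge a-\ell$ contains, after subtracting $cy'$, the chain starting at $x'$, with $y'\ge 1$ steps "spent". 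I will compare this with the Step~1 chain: $\mathbf{m}(bx)\le \mathbf{m}(bx')+cy'$, and I will check that this is dominated by the candidate maximum. This comparison is the part I expect to be the main obstacle. The clean statement should be that the residue classes $[bx]$ for $x\ge a-\ell$ coincide with classes $[b\tilde x]$ for suitable $\tilde x<a-\ell$ shifted by $c$-multiples. Since $\mathbf{m}$ depends only on the residue class mod $a$, and $bx\equiv b(x-(a-\ell))+c\pmod a$ (as $b\ell\equiv c$), I would try to show directly that $\mathbf{m}(bx)\le \mathbf{m}(b\tilde x)$ for an appropriate $\tilde x<a-\ell$ in the same class. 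If that fails, I would instead compute the bound using the explicit $(x',y')$ from Lemma~\ref{lem:7}.

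\textbf{Step 3: maximise over $x<a-\ell$.} Write $N:=\lfloor (a-\ell-1)/r\rfloor$ and $a-\ell=(N')r+u$, where $N'=\lfloor (a-\ell)/r\rfloor$. The residues $x_0<u$ admit the top multiple $j=N'$, giving $\mathbf{m}(bx_0)+N'cq$. The residues $u\le x_0<r$ only admit $j=N'-1$, giving $\mathbf{m}(bx_0)+(N'-1)cq$. Since $\mathbf{m}(bx)$ is increasing in $j$ for fixed $x_0$, the maximum over $x<a-\ell$ is
\[
\max\Bigl\{\max_{0\le x<u}\mathbf{m}(bx)+cq,\ \max_{u\le x<r}\mathbf{m}(bx)\Bigr\}+(N'-1)cq .
\]
Finally, I check that $N'-1=N$ when $u=0$ (in which case the first set is empty) and $N'=N$ when $u>0$; in the latter case I must reconcile the indexing. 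When $u>0$, the residues $x_0<u$ reach $j=N$ with an extra $cq$ available, so I will recount carefully so that the final exponent matches $\lfloor (a-\ell-1)/r\rfloor$. Together with Step~2, this yields the stated formula.
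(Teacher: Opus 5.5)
There is a genuine gap, in your Step~2 and in the ``reconcile the indexing'' at the end of Step~3. Your Step~3 computation is itself correct. Write $M$ for the bracket $\max\{\max_{0\le x<u}\mathbf{m}(bx)+cq,\ \max_{u\le x<r}\mathbf{m}(bx)\}$; then Step~3 gives $\max_{0\le x<a-\ell}\mathbf{m}(bx)=M+(N'-1)cq$.

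However, $u\neq 0$ always holds here. If $r\mid a-\ell$, then $r$ divides $a$ and $\ell$, hence $c$, so $r=1$. That contradicts $br>cq$, because $c>b$. So $N'=\lfloor (a-\ell-1)/r\rfloor$, and the target is exactly $cq$ \emph{larger} than the maximum over $x<a-\ell$.

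Your expectation in Step~2 is therefore false: the maximum is neither attained nor bounded within $0\le x<a-\ell$, and no recounting inside $[0,a-\ell)$ can produce the stated exponent. Concretely, take $a=11$, $b=15$, $c=16$, so $\ell=4$, $q=1$, $r=4$, $u=3$. Then $\max_{0\le x<7}\mathbf{m}(bx)=\mathbf{m}(6b)=46$, while $\mathbf{m}(10b)=62=46+cq$, which is the value the lemma gives.

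Your fallback of comparing $\mathbf{m}(bx)$ with $\mathbf{m}(b\tilde x)$ for some $\tilde x<a-\ell$ ``in the same class'' also cannot work. Since $\gcd(a,b)=1$, the classes $[bx]$ for $0\le x<a$ are pairwise distinct.

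The missing idea, which is how the paper proceeds, is to compute $\mathbf{m}(bx)$ exactly for $x=m(a-\ell)+s$ with $m\ge 1$, rather than trying to bound it away. Since $br>cq$ gives $\lambda<0$, the equivalence \eqref{eq:lambda} shows that the second alternative in Lemma~\ref{lem:7} always wins. So $\mathbf{m}(bx)=\mathbf{m}(bx')+cy'$ holds with equality, where:
\begin{itemize}
\item if $s<r$, then $y'=q-m+1$ and $x'=s+a-\ell-r\in[a-\ell-r,a-\ell)$;
\item if $s\ge r$, then $y'=q-m$ and $x'=s-r<a-\ell-r$.
\end{itemize}
Take $m=1$ and $0\le s<r$. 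Then $x'$ runs through the entire top block $[a-\ell-r,a-\ell)$ with $y'=q$, and every other $x$ gives something no larger, using your Step~1 monotonicity. This yields
\[
\max_{0\le x<a}\mathbf{m}(bx)=\max_{a-\ell-r\le x<a-\ell}\mathbf{m}(bx)+cq=M+N'cq,
\]
which is the statement. In other words, the maximiser lies in $[a-\ell,a-\ell+r)$, not in $[0,a-\ell)$.
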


\begin{proof}
For $1\le x<a$, for conciseness we again write $m:=\lfloor \tfrac{x}{a-\ell}\rfloor$ and $s:=x\bmod {(a-\ell)}$. Note that we cover every element in the set $\{1,\ldots,a-1\}$ when we let $m$ and $s$ vary in the expression $m(a-\ell)+s$. 

Since $br>cq$, referring to the notation in Theorem~\ref{thm:3} we have $\lambda<0$. Thus, if $m\ge 1$ then  certainly $m>\lambda+1$. Hence, referring to~\eqref{eq:lambda} and Lemma~\ref{lem:7}, 
we have $\mathbf{m}(bx)=\mathbf{m}(bx')+cy'$; here $x',y'$ are as in Lemma~\ref{lem:7}.   

So we first consider
\begin{equation}\label{eq:m>1}
\begin{split}
& \max_{m\ge 1}\left\{\max_{r\le s< a-\ell }\mathbf{m}\big(b(s-r)\big)+c(q-m),\,\max_{0\le s< r }\mathbf{m}\big(b(s+a-\ell-r)\big)+c(q-m+1)\right\} \\
    &\qquad =  \max_{m\ge 1}\left\{\max_{0\le s< a-\ell-r }\mathbf{m}(bs)+c(q-m),\,\max_{a-\ell-r\le s< a-\ell }\mathbf{m}(bs)+c(q-m+1)\right\} \\
    &\qquad=  \max\left\{\max_{0\le x< a-\ell-r }\mathbf{m}(bx)+c(q-1),\,\max_{a-\ell-r\le x< a-\ell }\mathbf{m}(bx)+cq\right\}. 
\end{split}
\end{equation}
We next compare the above with the maximum value among $\mathbf{m}(bx)$ for when $m=0$. This surmounts to comparing the above expression with 
\[
\max_{0\le x<a-\ell}\mathbf{m}(bx).\]
As, from the final line of \eqref{eq:m>1}, we have
\begin{align*}
&\max\left\{\max_{0\le x<a-\ell}\mathbf{m}(bx),\,\max\Big\{\max_{0\le x< a-\ell-r }\mathbf{m}(bx)+c(q-1),\,\max_{a-\ell-r\le x< a-\ell }\mathbf{m}(bx)+cq\Big\}\right\}\\
&\qquad=\max\left\{\max_{0\le x< a-\ell-r }\mathbf{m}(bx)+c(q-1),\,\max_{a-\ell-r\le x< a-\ell }\mathbf{m}(bx)+cq\right\},
\end{align*}
we hence obtain, also  from using Lemma~\ref{lem:7},  that
\[
  \max_{0\le x<a} \mathbf{m}(bx)=  \max\left\{\max_{0\le x< a-\ell-r }\mathbf{m}(bx)+c(q-1),\,\max_{a-\ell-r\le x< a-\ell }\mathbf{m}(bx)+cq\right\}. 
\]

\smallskip

So from the above line, it suffices to consider $0\le x<a-\ell$. As stated in the proof of Lemma~\ref{lem:minima-increase}, if $x\ge r$, starting from $\mathbf{v}(x,y)$, the first decrease in the $\mathbf{v}$-values occurs after $q$ steps, and results in the change of coordinates from $(x,y)$ to $(x-r,y+q)$. As $br>cq$, we observe that $\mathbf{v}(x-r,y+q)<\mathbf{v}(x,y)$. Hence, for each $ x'\in\{0,1,\ldots,r-1\}$, there is  a  number $x\in\{r,r+1,\ldots,2r-1\}$, namely $x'+r$, with 
 $bx=\mathbf{v}(x,0)>\mathbf{v}(x',q)=bx'+cq$. Note that $x'$ here corresponds to the $x'$ in Lemma~\ref{lem:7} and there $y'=q$. Hence we deduce from  Lemma~\ref{lem:7} that $\mathbf{m}(bx)=\mathbf{m}(bx')+cq>\mathbf{m}(bx')$.  Therefore
 \[
 \max_{0\le x<a}\mathbf{m}(bx)=\max\left\{\max_{r\le x< a-\ell-r }\mathbf{m}(bx)+c(q-1),\,\max_{a-\ell-r\le x< a-\ell }\mathbf{m}(bx)+cq\right\}. 
 \] Repeating this consideration for each $ x'\in\{r,r+1,\ldots,2r-1\}$, etc, we see that for the purpose of seeking the desired maximum as displayed above, we can restrict to the range $a-\ell-r\le x< a-\ell$.

Now if $nr$ is the unique multiple of $r$ satisfying $a-\ell-r\le nr<a-\ell$, then $n-1$ such steps, as above, of size $q$ can be applied for those $x<nr$ and $n$ such steps of size $q$ for $x\ge nr$ in the range  $a-\ell-r\le x< a-\ell$. Therefore, from the above,
\begin{align*}
    \max_{0\le x<a}\mathbf{m}(bx)&=\max_{a-\ell-r\le x<a-\ell}\mathbf{m}(bx)+cq\\
    &=\max\left\{\max_{0\le x<u}\mathbf{m}(bx)+cq ,\, \max_{u\le x <r}\mathbf{m}(bx)\right\}+cq\left\lfloor\frac{a-\ell-1}{r}\right\rfloor,
\end{align*}
as required. 
\end{proof}

We next recall the following definition from \cite{Tripathi}, which will play an important role in the sequel.
\begin{definition}
    \cite[Def.~3]{Tripathi}\label{def:3}
    Let $\ell>k$ and $br>cq$. We define 
    \[
    \mathscr{X}:=\{x\in\{1,\ldots,r\}\mid \mathbf{m}(bx)\equiv 0\bmod c\}.
    \]
\end{definition}

We recall the following observations from  \cite{Tripathi}, where we also fill in a minor gap in the proof of the first statement  in the lemma below; cf. Remark~\ref{rmk:only-two-local-minima}:
\begin{lemma} \cite[Rmk.~6]{Tripathi}\label{lem:rmk6}
    Suppose $\ell>k$ and $br>cq$. Then $r\in \mathscr{X}$ and 
    \[
    \min \mathscr{X}=\min\{x\in\{1,\ldots,r\}\mid \mathbf{m}(bx)\ne bx\}.
    \]
\end{lemma}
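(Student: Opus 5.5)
The plan is to prove the two assertions separately. Membership $r\in\mathscr{X}$ will come from Lemma~\ref{lem:7} together with a direct look at the orbit through $\mathbf{v}(r,0)$. The equality of minima will come from a minimal-counterexample argument on representations $bx''+cy''$.

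\emph{Step 1: $r\in\mathscr{X}$.} Since $r<a-\ell$ (as $r=a-q(a-\ell)$ and $q=\lfloor a/(a-\ell)\rfloor$), we have $r\bmod(a-\ell)=r$ and $\lfloor r/(a-\ell)\rfloor=0$. So the second case of Lemma~\ref{lem:7} applies with $x=r$, giving $(x',y')=(0,q)$. Concretely, starting at $(r,0)$, after $q-1$ steps the $x$-coordinate is $r+(q-1)(a-\ell)=\ell$, and one further step lands at $(0,q)$. Along this orbit starting from $(0,q)$, the $\mathbf{v}$-values are $cq$ plus the $\mathbf{v}$-values along the orbit starting from $(0,0)$, and the latter are all at least $0$. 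Hence the minimum is $cq$, and
\[
\mathbf{m}(br)=\min\{br,\,cq\}=cq,
\]
using the standing hypothesis $br>cq$. Since $\mathbf{m}(br)=cq\equiv 0\pmod c$, we get $r\in\mathscr{X}$. In particular $\mathscr{X}\neq\emptyset$, and the set $S:=\{x\in\{1,\ldots,r\}\mid \mathbf{m}(bx)\ne bx\}$ is also nonempty, since it contains $r$.

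\emph{Step 2: $\mathscr{X}\subseteq S$.} Let $x\in\{1,\ldots,r\}$. Then $bx\not\equiv0\pmod c$, because $\gcd(b,c)=1$ and $1\le x\le r<c$. So if $\mathbf{m}(bx)\equiv 0\pmod c$, then $\mathbf{m}(bx)\neq bx$. This gives $\min\mathscr{X}\ge\min S$.

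\emph{Step 3 (the main point, and the gap to fill): $x_0:=\min S$ lies in $\mathscr{X}$.} Write $\mathbf{m}(bx_0)=bx''+cy''$ with $x'',y''\ge0$. Since $\mathbf{m}(bx_0)<bx_0$, we must have $x''<x_0$. The aim is to show $x''=0$.

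Suppose instead that $1\le x''<x_0$, and set $d:=x_0-x''$, so $1\le d<x_0\le r$. Reducing modulo $a$ gives $cy''\equiv bx_0-bx''=bd\pmod a$, and $cy''=b\cdot0+c\,y''$ is a non-negative combination. By the definition of $\mathbf{m}$, this forces $\mathbf{m}(bd)\le cy''$. On the other hand, minimality of $x_0$ gives $d\notin S$, i.e.\ $\mathbf{m}(bd)=bd$. Therefore
\[
bx_0=bd+bx''\le cy''+bx''=\mathbf{m}(bx_0)<bx_0,
\]
which is a contradiction. Hence $x''=0$, so $\mathbf{m}(bx_0)=cy''\equiv0\pmod c$ and $x_0\in\mathscr{X}$. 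Combined with Step~2, this yields $\min\mathscr{X}=\min S$.

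The delicate point is Step~3: naively one might argue about local minima along a single orbit. The subtraction trick above, comparing $\mathbf{m}(b(x_0-x''))$ with $cy''$, avoids that and uses only the definition of $\mathbf{m}$ and the minimality of $x_0$.
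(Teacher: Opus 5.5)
Your proof is correct and follows essentially the paper's route. You get $\mathbf{m}(br)=cq$ from the orbit $\mathbf{v}(r,0)\to\cdots\to\mathbf{v}(0,q)$, packaged via Lemma~\ref{lem:7} rather than phrased with local minima. You then use the same subtraction argument, comparing $\mathbf{m}(b(x_0-x''))$ with $cy''$ to force $x''=0$, and add an explicit justification of $\mathscr{X}\subseteq S$ that the paper only asserts. The one unstated detail is that $y''\ge 1$ (otherwise $x''\equiv x_0 \pmod a$ with $0\le x''<x_0<a$), which guarantees that $cy''$ is a positive admissible value in the definition of $\mathbf{m}$.
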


\begin{proof}
  We claim that $\mathbf{m}(br)=cq$, which then yields the first statement. As $a=q(a-\ell)+r$, we have the following inequalities among the first $q+1$ consecutive $\mathbf{v}$-values starting from $ \mathbf{v}(r,0)$:
   \[
   \mathbf{v}(r,0)< \mathbf{v}(a-\ell+r,1)<\cdots < \mathbf{v}((q-1)(a-\ell)+r,q-1) > \mathbf{v}(0,q)
   \]
   So $ \mathbf{v}(r,0)$ and $ \mathbf{v}(0,q)$ are consecutive local minima. As $br>cq$, we obtain that 
   \[
    \min \{\mathbf{v}(r,0),\mathbf{v}(0,q)\} = \mathbf{v}(0,q).
   \]
   We now note that any local minimum $\mathbf{v}(x,y)$ with $y'>q$, so occurring after $\mathbf{v}(0,q)$, will always satisfy $\mathbf{v}(x,y)>\mathbf{v}(0,q)$. Hence the claim.

For the last statement, write $\widehat{x}:=\min \mathscr{X}$ and $\overline{x}=\min\{x\in\{1,\ldots,r\}\mid \mathbf{m}(bx)\ne bx\}$. Observe that $\mathbf{m}(b\overline{x})=bx_0+cy_0$ for some $x_0\ge 0$ and $y_0\ge 1$. Hence
\[
bx_0<bx_0+cy_0<b\overline{x},
\]
and therefore $x_0<\overline{x}$. Now we also have that $cy_0\in [b(\overline{x}-x_0)]$ and that $cy_0<b(\overline{x}-x_0)$, which gives that $\mathbf{m}(b(\overline{x}-x_0))<b(\overline{x}-x_0)$. If  $x_0>0$, this then contradicts the choice of $\overline{x}$ as the minimal element of the set  $\{x\in\{1,\ldots,r\}\mid \mathbf{m}(bx)\ne bx\}$. So we must have $x_0=0$.  Hence $\mathbf{m}(b\overline{x})=cy_0$, and so $\overline{x}\in\mathscr{X}$. As $\mathscr{X}\subseteq \{x\in\{1,\ldots,r\}\mid \mathbf{m}(bx)\ne bx\}$, it follows that $\overline{x}=\widehat{x}$, as wanted.
\end{proof}

\begin{remark}\label{rmk:only-two-local-minima}
    It was claimed in \cite[Rmk.~6]{Tripathi}  that  the local minima in the class $[br]$ consist of $\mathbf{v}(r,0)$ and $\mathbf{v}(0,q)$ only. As indicated in the first part of the above proof, there can be local minima in the class  $[br]$ after $\v(0,q)$, and it is not difficult to find examples of such. It was most likely  meant in \cite{Tripathi} that there are no other local minima between $br$ and $cq$, since we go from $\v(r,0)$ to $\v(0,q)$ in one $\downarrow$-step.
\end{remark}

To proceed, we recall some notation from \cite[Def.~4]{Tripathi}; however we amend one of the definitions (cf. Remark~\ref{rmk:new-mu} below):
\begin{notation}\label{notation}
    Set $A:=br-cq$, $B:=b(a-\ell-r)+c(q+1)$, and
    \[
    \Lambda:=\left\lfloor\frac{r}{a-\ell-r}\right\rfloor,\qquad    \Delta:=\left\lfloor\frac{A}{B}\right\rfloor,\qquad\Lambda':=\left\lfloor\frac{a-\ell-r}{r}\right\rfloor,\qquad    \Delta':=\left\lfloor\frac{B}{A}\right\rfloor.
    \]
    Also    let
  $$ \mu := \mathbf{\min}\left\{i \in \mathbb{Z}_{\ge 0}\, \bigg\vert \left\lfloor \frac{(i+1)B}{A}\right\rfloor \neq \left\lfloor \frac{(i+1)(a-\ell-r)}{r}\right\rfloor\right\},$$
and recall that $u\equiv (a-\ell) \bmod r$.
\end{notation}

\begin{remark}\label{rmk:new-mu}
    In \cite[Lem.~11]{Tripathi}, the quantity  $\mu$, written $\mu'$ in \cite{Tripathi}, was instead defined as
$$
\mu:=\max\left\{i\in\mathbb{Z}_{\ge 0}\,\bigg\vert\, \left\lfloor \frac{i B}{A}\right\rfloor= \left\lfloor \frac{i(a-\ell-r)}{r}\right\rfloor\right\}.
$$
We suspect that our definition of $\mu$ is actually what Tripathi had in mind,  since $\mu$ was stated to be $4$ in \cite[Example~5]{Tripathi}, which follows our definition, whereas Tripathi's definition should give $\mu \ge 9$. This can be seen by plugging in the example's values with $i = 9$, and noting that
$$
\left\lfloor\frac{9 \cdot 2500}{3800}\right\rfloor = 5 = \left\lfloor\frac{9 \cdot 22}{39}\right\rfloor
.$$
For a more detailed example of the difference in the two definitions, c.f. \cite[Example~5.18]{Peter}.

\end{remark}

Before proceeding, we document a useful observation.

\begin{lemma}\label{lem:A=B}
     Suppose $\ell>k$ and $br>cq$. Then, with reference to Notation~\ref{notation}, we have $A\ne B$.
\end{lemma}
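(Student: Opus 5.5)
The plan is to assume $A=B$ and derive a contradiction from $\gcd(b,c)=1$ together with the size constraints on $r$ and $a-\ell$. Writing out the definitions in Notation~\ref{notation}, the equality $A=B$ reads
\[
br-cq=b(a-\ell-r)+c(q+1),
\]
which rearranges to
\[
b\big(2r-(a-\ell)\big)=c(2q+1).
\]

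First I would settle the sign and the size of the integer $N:=2r-(a-\ell)$. The right-hand side $c(2q+1)$ is strictly positive, since $c>0$ and $q\ge 0$. Hence $N>0$, and in particular $N\neq 0$.

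For the upper bound I use $r<a-\ell$. This holds because $q=\lfloor a/(a-\ell)\rfloor$ and $r=a-q(a-\ell)$, so $r$ is the remainder of $a$ on division by $a-\ell$. It follows that $N=r-\big((a-\ell)-r\big)<r<a<c$. Thus $0<N<c$.

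Finally, since $\gcd(b,c)=1$, the identity $bN=c(2q+1)$ forces $c\mid N$. But a positive integer strictly smaller than $c$ cannot be divisible by $c$, which is the desired contradiction, so $A\neq B$.

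There is no real obstacle here. The only points needing care are to justify $r<a-\ell$ directly from Notation~\ref{not:m}, and to use the positivity of the right-hand side to rule out $N\le 0$, so that the divisibility argument applies to a nonzero integer. The hypotheses $\ell>k$ and $br>cq$ enter only to guarantee that the notation is in force (and that $r>0$); they are not otherwise needed for the argument.
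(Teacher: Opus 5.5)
Your proof is correct, but after the shared rearrangement $b(2r-a+\ell)=c(2q+1)$ it applies coprimality in the opposite direction from the paper. The paper deduces $b\mid 2q+1$, so $a<b\le 2q+1$. It then uses $a=q(a-\ell)+r$ to force $a-\ell=1$, i.e.\ $r=0$, which $br>cq$ excludes. You instead deduce $c\mid 2r-(a-\ell)$ and trap this integer strictly between $0$ and $c$, using only the sign of $c(2q+1)$ and the bound $r<a-\ell$.

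Your route gains two things:
\begin{enumerate}
\item[(i)] It never uses $br>cq$, so $A\neq B$ holds whenever the notation is defined.
\item[(ii)] It avoids the paper's step ``$a\le 2q$ implies $a-\ell<2$''. That step tacitly needs $r>0$ to rule out $a-\ell=2$, $a=2q$, $r=0$.
\end{enumerate}
The paper's route instead reduces the question to the degenerate case $r=0$ (i.e.\ $\ell=a-1$), which it has already set aside.

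Your argument is in fact the $\eta=1$ instance of the divisibility-by-$c$ argument the paper itself uses later, in claim (i) of the proof of Lemma~\ref{lem:11}. There it shows $c\mid r-\eta(a-\ell-r)$ with $0<r-\eta(a-\ell-r)<r<a-\ell<c$. So your version also makes the two proofs uniform.
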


\begin{proof}
    Suppose for a contradiction that $A=B$, and so $br-cq=b(a-\ell-r)+c(q+1)$. Rearranging gives
    \[
    b(2r-a+\ell)=c(2q+1).
    \]
    As $\text{gcd}(b,c)=1$, we have in particular that $b$ divides $2q+1$. Hence $a\le 2q$. From $a=q(a-\ell)+r$, it follows that $a-\ell<2$. That is, we have $a-\ell=1$. As remarked just before Lemma~\ref{lem:7}, this is equivalent to the case $r=0$ which cannot happen when $br>cq$.
\end{proof}

The corresponding statement of \cite[Lem.~11]{Tripathi} still holds with minor adjustments to the proof in light of the new definition of  $\mu$, but also because the reference to \cite[Lem.~5]{Tripathi} is only used to determine how many steps are needed to go between two specific local minima of the form $\mathbf{v}(x,0)$ and $\mathbf{v}(0,y)$, regardless of how many other (fake) local minima one needs to pass through. As these local minima  $\mathbf{v}(x,0)$ and $\mathbf{v}(0,y)$ are  always true local minima, there is no issue with this aspect the proof of  \cite[Lem.~11]{Tripathi}. Indeed, the use of \cite[Lem.~5]{Tripathi} here just involves potentially passing through some fake local minima. Specifically, to go from a fake local minimum to the next true local minimum, one has to implement more $\downarrow$-steps, where in the notation of \cite{Tripathi} (see the remarks  after \cite[Lem.~5]{Tripathi}) a \emph{$\downarrow$-step} is the change $(x,y)\mapsto (x-r,y+q)$ in the coordinates of the $\mathbf{v}$-values, which results in a decrease in the $\mathbf{v}$-value by $A$ (cf. Notation~\ref{notation}). Analogously an \emph{$\uparrow$-step} is the change $(x,y)\mapsto (x+a-\ell-r,y+q+1)$,  which results in an increase in the $\mathbf{v}$-value by $B$. In particular, we note the following, which gives more information on when fake local minima occur (compare also with Remark~\ref{rmk:fake-occurrence}):

\begin{lemma}\label{lem:failure}
 Suppose $\ell>k$ and $br>cq$, and let $\mathbf{v}(x,y)$ and $\mathbf{v}(x',y')$ be consecutive local minima. Then either 
   \[
   y'-y=\begin{cases}
       q+1 & \text{if }0\le x<r,\\
       q & \text{if }r\le x<a-\ell,
   \end{cases}
\quad\text{and then}
   \quad
   x'-x=\begin{cases}
       a-\ell-r & \text{if }0\le x<r,\\
       -r & \text{if }r\le x<a-\ell,
   \end{cases}
   \] 
   or
    \[
   y'-y>\begin{cases}
       q+1 & \text{if }0\le x<r,\\
       q & \text{if }r\le x<a-\ell,
   \end{cases}
   \] 
   and  in this case $q=1$ and $r=\ell$.
\end{lemma}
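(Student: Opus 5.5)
We derive the statement from Lemma~\ref{lem:5}, specialised to the regime $br>cq$, by computing the number $\iota$ of increasing steps and the number of decreasing steps that follow. Let $\mathbf{v}(x,y)$ and $\mathbf{v}(x',y')$ be consecutive local minima. By Lemma~\ref{lem:5} we have $0\le x<\min\{a-\ell,\ell\}$.

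\textbf{Increasing steps.} As in the proof of Lemma~\ref{lem:minima-increase},
\[
\iota=\left\lceil \frac{\ell-x}{a-\ell}\right\rceil=\begin{cases} q & \text{if } x<r,\\ q-1 & \text{if } x\ge r.\end{cases}
\]
This part of that argument used only $\ell=(q-1)(a-\ell)+r$ and $0\le x<a-\ell$, not the inequality $br<cq$. Writing $x_\iota=\ell+\rho_x$, we get
\[
x_\iota=\begin{cases} q(a-\ell)+x=a-r+x & \text{if } x<r,\\ (q-1)(a-\ell)+x=\ell-r+x & \text{if } x\ge r.\end{cases}
\]

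\textbf{Decreasing steps.} By Lemma~\ref{lem:5}, the number of decreasing steps is $\lfloor x_\iota/\ell\rfloor$, which is at least $1$. It equals exactly $1$ precisely when $x_\iota<2\ell$.
\begin{itemize}
\item In that case the coordinate changes are those in the first alternative of the statement. They follow from the same computation as in Lemma~\ref{lem:minima-increase}:
\[
y'-y=\iota+1,\qquad x'-x=\iota(a-\ell)-\ell .
\]
For $x<r$ this gives $y'-y=q+1$ and $x'-x=a-\ell-r$; for $x\ge r$ it gives $y'-y=q$ and $x'-x=-r$.
\item Otherwise $y'-y$ is strictly larger than these values, which is the second alternative.
\end{itemize}

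\textbf{The main step: forcing $q=1$ and $r=\ell$.} It remains to show that $x_\iota\ge 2\ell$ implies $q=1$ and $r=\ell$.
\begin{itemize}
\item \emph{Case $x\ge r$.} Here $x_\iota=\ell-r+x<\ell-r+(a-\ell)$. So $x_\iota\ge 2\ell$ forces $a-r>2\ell$, that is $q(a-\ell)>2\ell$.
\item \emph{Case $x<r$.} Here $x_\iota=a-r+x<a\le 2\ell$ whenever $a-\ell\le \ell$. So $x_\iota\ge 2\ell$ forces $a-\ell>\ell$.
\end{itemize}
The key inequality is $(q-1)(a-\ell)\le \ell$, already recorded in the proof of Lemma~\ref{lem:minima-increase}; it holds because $r\ge 0$.
\begin{itemize}
\item If $q\ge 2$, then $a-\ell\le \ell$, which rules out the case $x<r$. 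In the case $x\ge r$ it gives $q(a-\ell)\le 2(q-1)(a-\ell)\le 2\ell$, which rules out $q(a-\ell)>2\ell$. Hence $q=1$.
\item With $q=1$ we get $r=a-(a-\ell)=\ell$.
\end{itemize}
The delicate point is the boundary $q=2$ in the case $x\ge r$. There one needs $q(a-\ell)\le 2\ell$, and I would check this carefully against the strictness of the inequalities, noting that $r<a-\ell$.

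For consistency, observe that when $q=1$ we have $r=\ell$, so Lemma~\ref{lem:5} gives $x<\min\{a-\ell,\ell\}=r$. Thus only the case $x<r$ can occur, with $x_\iota=a-\ell+x$. This can indeed reach $2\ell$ when $a-\ell>\ell$; an example is the one in Remark~\ref{rmk:failure-vs-truth}, where $x_\iota=8\ge 2\ell=8$.
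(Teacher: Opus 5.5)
Your proof is correct and follows essentially the same route as the paper. Both arguments reuse the step counts from Lemma~\ref{lem:5}, with $\ell+\rho_x$ equal to $q(a-\ell)+x$ or $(q-1)(a-\ell)+x$, and then use $(q-1)(a-\ell)\le \ell$ to force $q=1$; the only difference is that the paper splits on the global condition $a\le 2\ell$, whereas you split on $x_\iota<2\ell$. The boundary case you flag is not delicate: $q\le 2(q-1)$ holds for every $q\ge 2$, with equality at $q=2$. In fact, since $x_\iota\le a-1$, the condition $x_\iota\ge 2\ell$ directly gives $a-\ell>\ell$, and hence $q=1$, in both cases at once.
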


\begin{proof}
    As in the proof of Lemma~\ref{lem:5}, after a number of consecutive $\mathbf{v}$-value increases from $\mathbf{v}(x,y)$, the number of decreasing $\mathbf{v}$-values, before the next local minimum $\mathbf{v}(x',y')$ is reached, is
$$
\left\lfloor\frac{\ell+\rho_x}{\ell}\right\rfloor,
$$
where $\rho_x\equiv x-\ell \bmod {(a-\ell)}$ and
\[
\ell+\rho_x=\begin{cases}
    q(a-\ell)+x & \text{if }x<r,\\
      (q-1)(a-\ell)+x & \text{if }x\ge r.
\end{cases}
\]
As argued in the proof of Lemma~\ref{lem:minima-increase},
\[
(q-1)(a-\ell)\le \ell< q(a-\ell).
\]
If $q(a-\ell)+r\le 2\ell$, then the number of decreasing consecutive $\mathbf{v}$-values to get to the local minimum $\mathbf{v}(x',y')$ is just one, and in this case we obtain the first statement, as argued in \cite[Proof of Lem.~5]{Tripathi}. Suppose otherwise, and so $q(a-\ell)+r> 2\ell$, equivalently $a-\ell>\ell$, and so one passes through at least two decreasing consecutive $\mathbf{v}$-values to arrive at the local minimum $\mathbf{v}(x',y')$. As $(q-1)(a-\ell)\le \ell$, it then follows that $q=1$, and thus $r=\ell$. 
\end{proof}

\begin{remark}\label{rmk:fake-occurrence}
Continuing off Remark~\ref{rmk:failure-vs-truth}, we see from the above that when \cite[Lem.~5]{Tripathi} does not hold, it follows that $q=1$ and $r=\ell$. Therefore, when we need to perform more than one decreasing step to get to the next local minimum, each decreasing step is actually a $\downarrow$-step.
\end{remark}

\begin{remark}
In fact the converse to the above remark is true,  i.e. if $q=1$ and $r=\ell$, then \cite[Lem.~5]{Tripathi} does not hold. Indeed, consider  $x_0=\ell -1$. One increasing step yields $x_1 = a-1$, and one decreasing step gives $x_2 = a-\ell-1$. Recalling that $r=\ell<a-\ell$, we see that there is room for at least one more decreasing step.
\end{remark}

Below we state \cite[Lem.~11]{Tripathi} using the new definition of~$\mu$ and we give the amended proof, where we also provide more justification for several claims given in the original proof. For convenience, we first isolate a small result that will be used in the proof of \cite[Lem.~11]{Tripathi}. 

\begin{lemma}
    \label{lem:r-not-div-by-a-l-r}
If $br>cq$, then  $r$ is not divisible by $a-\ell-r$.
\end{lemma}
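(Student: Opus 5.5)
We argue by contradiction: suppose $br>cq$ and $(a-\ell-r)\mid r$, say $r=t(a-\ell-r)$ for some integer $t\ge 1$. Note that $r>0$ because $br>cq$, and $a-\ell-r>0$ because $r<a-\ell$. The plan is to turn this divisibility into a common divisor of $a$ and $\ell$, and then contradict the coprimality of $b$ and $c$ with $a$.

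First, set $d:=a-\ell-r$. Then $a-\ell=r+d=(t+1)d$, so $d$ divides $a-\ell$. It also divides $r$, and hence it divides $a=q(a-\ell)+r$. Consequently $d\mid \ell$ as well, so $d\mid\gcd(a,\ell)$.

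Second, we claim that $\gcd(a,\ell)=1$. Since $\ell\equiv cb^{-1}\pmod a$, we have $b\ell\equiv c\pmod a$. Any common divisor of $a$ and $\ell$ therefore divides $c$, and so divides $\gcd(a,c)=1$. Hence $d=1$, which gives $a-\ell-r=1$ and $r=t$.

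Third, we use $d=1$ together with $a-\ell=t+1=r+1$. Then $a=q(r+1)+r$ and $\ell=a-r-1$. This case is not excluded by arithmetic alone, so it must be ruled out using $br>cq$ and the size conditions $\ell>k$ and $c>b$.

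\begin{itemize}
\item Since $r\equiv\ell\pmod{a-\ell}$ with $r<a-\ell$, the case $q=1$ gives $r=\ell$. Then $a-\ell=\ell+1$, so $a=2\ell+1$.
\item In general, $b\ell\equiv c\pmod a$ with $\ell=a-r-1$ gives $c\equiv -b(r+1)\pmod a$.
\item We would then try to show that $\ell>k$ and $br>cq$ cannot both hold, perhaps by examining the $\mathbf{v}$-values from $\mathbf{v}(r,0)$ as in Lemma~\ref{lem:rmk6}.
\end{itemize}

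This third step is the main obstacle, and it may be that the lemma's intended content is only the gcd argument above. Before investing in it, I would re-check whether the first two steps already conflict with $r<a-\ell$ or with $a-\ell\ge 2$ in a way I am missing. If they do not, I would test small examples with $a-\ell=r+1$ to see whether the claim can actually fail in that case.
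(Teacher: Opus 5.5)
Your Steps 1 and 2 are exactly the paper's argument. The paper also writes $r=\eta(a-\ell-r)$, deduces that $a-\ell-r$ divides both $a$ and $\ell$, and then uses $b\ell\equiv c\pmod a$ to transfer this common divisor to $\gcd(a,c)$. Step 3 cannot be completed, and your closing suspicion is correct. Steps 1--2 only show that $(a-\ell-r)\mid r$ forces $a-\ell-r=1$, and that case genuinely occurs under all the standing hypotheses.

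For $(a,b,c)=(5,6,7)$ we have $b\equiv 1\pmod 5$, so $\ell=7\bmod 5=2>1=k$. Then $a-\ell=3$, $q=1$, $r=2$ and $br=12>7=cq$, yet $a-\ell-r=1$ divides $r$. This is not special to $q=1$: $(a,b,c)=(11,13,14)$ gives $\ell=7$, $k=1$, $q=2$, $r=3$, $br=39>28=cq$, and again $a-\ell-r=1$.

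Your own first bullet shows why no argument of the kind you sketch can work. If $q=1$ then $r=\ell$, so $br>cq$ just says $b\ell>c$, which is automatic from $\ell>k$ by Lemma~\ref{lem:c-bl}. Hence every admissible triple with $a=2\ell+1$ is a counterexample.

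So the defect is in the statement, not in your reasoning. The paper's proof declares ``$a-\ell-r$ divides $\gcd(a,\ell)$'' to be impossible, overlooking precisely the trivial divisor $1$ that you isolated. What your Steps 1--2 (and the paper's argument) actually prove is: if $br>cq$ and $(a-\ell-r)\mid r$, then $a-\ell-r=1$.

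This also affects the proof of Lemma~\ref{lem:11}, where the lemma is invoked to obtain $\lfloor\Lambda(a-\ell-r)/r\rfloor=0$. The case $a-\ell-r=1$ needs separate treatment there, since then $\Lambda=r$ and that floor equals $1$. The conclusion $\mu=\Delta$ still holds in that case: the only problematic value $\Delta+1=r$ is handled by $\lfloor rB/A\rfloor\ge 2$.
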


\begin{proof}
Assume for a contradiction that $r \equiv 0\bmod{(a-\ell-r)}$. This can only happen if either $r=0$, which by  the hypothesis is excluded,  or $r$ is some  positive integer multiple of $a-\ell-r$. So suppose the latter, and let $r = \eta(a-\ell-r)$, for some $\eta \in \mathbb{N}.$ From the definition of~$r$, we thus have  
\begin{align*}
     a &= r + q(a-\ell)\\
    &= \eta(a-\ell-r) + q(a-\ell) \\
    &= \eta(a-\ell-r) + q(a-\ell-r + r) \\
    &= \eta(a-\ell-r) + q\big(a-\ell-r + \eta(a-\ell-r)\big) \\
    &= (\eta+q+q\eta)(a-\ell-r).
\end{align*}
So  $a$ is also a multiple of $a-\ell-r$.  Similarly,
\begin{align*}
    \ell &= a-r-(a-\ell-r)\\
    &= (\eta+q+q\eta)(a-\ell-r) - \eta(a-\ell-r)-(a-\ell-r)\\
    &= (q+q\eta-1)(a-\ell-r).
\end{align*}
This implies that $\gcd(a,\ell)$ is divisible by $a-\ell-r$, which is impossible under the standing assumption of $a,b,c$ being pairwise coprime. Indeed, recall that $\ell \equiv cb^{-1} \bmod a$ and therefore $b\ell=c+ \kappa a$ for some  integer $\kappa\ne 0$. Then this combined with the fact that $\gcd(a,\ell)$ is divisible by $a-\ell-r$, yields that $c$ is divisible by $a-\ell-r$, and hence $\gcd(a,c)$ is divisible by $a-\ell-r$, a contradiction.
\end{proof}

\begin{lemma}
    \cite[Lem. 11]{Tripathi}\label{lem:11}
    Let $\ell>k$ and $br>cq$. Then, with reference to Notation~\ref{notation},
    \begin{align*}
        \mathscr{X}&=\left\{r\left(\left\lfloor\tfrac{(a-\ell-r)t}{r}\right\rfloor+1\right)-(a-\ell-r)t\mid 0\le t\le \mu\right\}\\
        &=\left\{\big(r-(a-\ell-r)t\big) \bmod r\mid 0\le t\le \mu\right\}.
    \end{align*}
    Furthermore, if $\mu<\lfloor\tfrac{r}{u}\rfloor$ then
    \[
    \mathscr{X}=\{r-ut\mid  0\le t\le \mu\}.
    \]
    In particular, if $\Lambda>\Delta$ or $\Delta'>\Lambda'$, then
    \[
     \mathscr{X}=\{r-(a-\ell-r)t\mid  0\le t\le \Delta\}.
    \]
\end{lemma}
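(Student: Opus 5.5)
The plan is to describe $\mathscr{X}$ by following, from each starting point $\mathbf{v}(x,0)$ with $1\le x\le r$, the walk through local minima made of $\uparrow$-steps $(x,y)\mapsto(x+a-\ell-r,\,y+q+1)$ and $\downarrow$-steps $(x,y)\mapsto(x-r,\,y+q)$. We then characterise when this walk reaches a point $(0,y)$ whose $\mathbf{v}$-value $cy$ is the minimum of the class $[bx]$.

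\emph{Step 1: the walk.} By Lemma~\ref{lem:failure} and Remark~\ref{rmk:fake-occurrence}, consecutive local minima starting from $\mathbf{v}(x,0)$ are reached by a $\downarrow$-step whenever the current $x$-coordinate is $\ge r$. Otherwise they are reached by an $\uparrow$-step, possibly followed by further $\downarrow$-steps when $q=1$, $r=\ell$. In both cases the sequence of coordinates is obtained by this greedy rule: go down whenever possible, else go up.

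So after $t$ $\uparrow$-steps the number of $\downarrow$-steps taken is $\lfloor (x+t(a-\ell-r))/r\rfloor$. The walk hits $x$-coordinate $0$ after exactly $t$ $\uparrow$-steps precisely when
\[
x=r\big(\lfloor\tfrac{(a-\ell-r)t}{r}\rfloor+1\big)-(a-\ell-r)t=:x_t,
\]
and it then arrives at $(0,y_t)$ with
\[
cy_t=bx_t+tB-\big(\lfloor\tfrac{(a-\ell-r)t}{r}\rfloor+1\big)A .
\]
By Lemma~\ref{lem:7}, $\mathbf{m}(bx)$ is the minimum along this walk. Since all points with first coordinate $0$ have $\mathbf{v}$-value divisible by $c$, and conversely, $x\in\mathscr{X}$ iff the minimum is attained at such a point $(0,y_t)$. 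Thus $\mathscr{X}\subseteq\{x_t\}$, and $x_t\in\mathscr{X}$ iff $cy_t$ is smaller than every earlier value on the walk. For $t=0$ this recovers $x_0=r$, consistent with Lemma~\ref{lem:rmk6}.

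\emph{Step 2 (the main obstacle): the cutoff $t\le\mu$.} Write $w:=a-\ell-r$. The values along the walk from $(x_t,0)$ change by $+B$ and $-A$. One should check that the running value stays $\ge cy_t$ before the end iff, for every $i\le t$, the number of $\downarrow$-steps available by position, $\lfloor iw/r\rfloor$, agrees with the number affordable by value, $\lfloor iB/A\rfloor$. Here we use $A\neq B$ (Lemma~\ref{lem:A=B}) to avoid ties.

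Concretely, I would compare the two sequences $\lfloor (i+1)B/A\rfloor$ and $\lfloor (i+1)w/r\rfloor$ for $i=0,\dots,t-1$. The first index where they disagree is exactly $\mu$ of Notation~\ref{notation}.
\begin{itemize}
\item If they disagree with the value count larger, then some intermediate local minimum $\mathbf{v}(x',y')$ with $x'>0$ lies below $cy_t$, and $x_t\notin\mathscr{X}$.
\item If they disagree the other way, then the walk must pass through $x$-coordinate $0$ earlier, i.e.\ $x_t$ coincides with some $x_{t'}$ with $t'<t$.
\end{itemize}
Getting this case analysis right, in the direction that matches the new definition of $\mu$, is where I expect the real work. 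Together these give $\mathscr{X}=\{x_t: 0\le t\le\mu\}$.

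\emph{Step 3: the remaining descriptions.} For $1\le t\le\mu$, the divisibility $r\mid wt$ cannot occur, by Lemma~\ref{lem:r-not-div-by-a-l-r} combined with the agreement of floors up to $\mu$. Hence $x_t=(r-wt)\bmod r$, while $t=0$ gives $r$; this is the second equality.

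Since $a-\ell\equiv u \pmod r$, we have $-wt\equiv -ut \pmod r$. Therefore if $\mu<\lfloor r/u\rfloor$, then $ut<r$ for all $t\le\mu$ and $x_t=r-ut$.

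Finally, if $\Lambda>\Delta$ or $\Delta'>\Lambda'$, we first show $w<r$, so that $u=w$. We then show directly from the floor comparison that $\mu=\Delta$. This gives $\mathscr{X}=\{r-wt: 0\le t\le\Delta\}$.
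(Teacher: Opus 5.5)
Your outline follows the paper's route: the greedy $\uparrow$/$\downarrow$ walk from $\mathbf{v}(x,0)$ to its first point with $x$-coordinate $0$, then a comparison of $As_1$ with $Bs_2$ along the way. The criterion you state in Step 2 is correct, but you only assert it, and your case split exposes the missing fact. With your $w=a-\ell-r$ one has $Br-Aw=ca>0$, so $B/A>w/r$ and $\lfloor iB/A\rfloor\ge\lfloor iw/r\rfloor$ for every $i$. Your second bullet is therefore vacuous. Coincidences $x_t=x_{t-r}$ never matter either, since $rB/A=w+ca/A$ with $ca/A>1$ forces $\mu<r$. To close Step 2, consider the walk points with $i\ge 1$ $\uparrow$-steps still to go. The lowest of them is the local minimum just before the next $\uparrow$-step, from which $\lfloor iw/r\rfloor+1$ $\downarrow$-steps remain, so its value is $cy_t+A(\lfloor iw/r\rfloor+1)-Bi$. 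This exceeds $cy_t$ exactly when $\lfloor iB/A\rfloor=\lfloor iw/r\rfloor$. Ties cannot occur because $bx'+cy'=cy$ with $0<x'<c$ contradicts $\gcd(b,c)=1$; Lemma~\ref{lem:A=B} plays no role there.

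For $t>\mu$, taking $i=\mu+1$ produces your ``intermediate local minimum below $cy_t$''. Here you genuinely differ from the paper, and you are right to. The paper only compares $\mathbf{v}(x,0)$ with $\mathbf{v}(0,y)$, via the invalid step $\frac{w(\mu+1)}{r}\le\lfloor\frac{w(\mu+1)}{r}\rfloor$, and that comparison can go the wrong way. For $(a,b,c)=(26,47,49)$ one has $\ell=r=11$, $q=1$, $w=4$, $A=468$, $B=286$ and $\mu=1$. Then $x_3=10$ satisfies $\mathbf{v}(10,0)=470>392=\mathbf{v}(0,8)$, yet $10\notin\mathscr{X}$ because $\mathbf{v}(3,3)=288$ lies on the walk.

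The genuine gap is the last part of Step 3, and it has two pieces.
\begin{itemize}
\item Your claim $w<r$ (hence $u=w$) is false in the case $\Delta'>\Lambda'$. For $(a,b,c)=(10,11,13)$ one gets $\ell=r=3$, $w=4$, $u=1$, $A=20$, $B=70$ and $\Delta'=3>1=\Lambda'$. This is harmless only because $\Delta=0$ there, and that uses $A\neq B$ (Lemma~\ref{lem:A=B}): if $A=B$ then $\Delta=1$ while $\mu=0$.
\item In the case $\Lambda>\Delta$, $\mu=\Delta$ does not follow from the floor comparison alone. You need $\lfloor \Delta B/A\rfloor=0$, i.e.\ $B\nmid A$. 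If $A=\eta B$, the floors of $jB/A$ and $jw/r$ already differ at $j=\Delta$, giving $\mu=\Delta-1$. The paper excludes this arithmetically: $A=\eta B$ gives $b(r-\eta w)=c(\eta(q+1)+q)$, so $c\mid r-\eta w$, while $0<r-\eta w<c$ because $\Lambda\ge\eta+1$.
\end{itemize}
You also need $(\Delta+1)w<r$, so that $\lfloor(\Delta+1)w/r\rfloor=0$. That, not $r\nmid wt$, is what the paper takes from Lemma~\ref{lem:r-not-div-by-a-l-r}.

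The second displayed equality needs no non-divisibility at all. One has $x_t\equiv r-wt\pmod r$ and $x_t\in\{1,\dots,r\}$, with $0$ read as $r$, as the $t=0$ term already forces. In any case, $r\nmid wt$ for $1\le t\le\mu$ follows from $\gcd(r,w)=\gcd(a,\ell)=1$ and $\mu<r$.
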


\begin{proof}
In light of Lemma~\ref{lem:failure}, we note that to pass from one local minimum to the next, we use one of three options: either an $\uparrow$-step $(x,y)\mapsto (x+a-\ell-r,y+q+1)$ when $0\le x\le r-1$; or a $\downarrow$-step $(x,y)\mapsto (x-r,y+q)$ when $r\le x<a-\ell$; or lastly, at most one $\uparrow$-step 
 followed by several $\downarrow$-steps (recall that in this last option we have $q=1$ and $r=\ell$). Observe that an $\uparrow$-step causes an increase in the $\mathbf{v}$-value by the quantity~$B$ and a $\downarrow$-step causes a decrease in the $\mathbf{v}$-value by the quantity~$A$.

 From Lemma~\ref{lem:rmk6}, we have that $r\in\mathscr{X}$. Consider now $x\in\mathscr{X}\backslash\{r\}$. We observe that then $\mathbf{m}(bx)=cy$ for some $y\in\mathbb{N}$. Since $\mathbf{v}(x,0)$ and $\mathbf{v}(0,y)$ are both local minima in the congruence class $[bx]$, we can arrive at $\mathbf{v}(0,y)$ starting from $\mathbf{v}(x,0)$ via a sequence of $t_1$ $\downarrow$-steps and $t_2$ $\uparrow$-steps, for some $t_1,t_2\in \mathbb{N}$. Indeed, since $0<x<r$, both $t_1$ and $t_2$ must be positive.  So 
 \[
 x=rt_1-(a-\ell-r)t_2\qquad\text{and}\qquad y=qt_1+(q+1)t_2,
 \]
 and thus the inequality $0<x<r$ yields
 \begin{equation}\label{eq:t's}
 \frac{r}{a-\ell-r}(t_1-1)<t_2<\frac{r}{a-\ell-r}t_1.
 \end{equation}

 Let $x=rt_1-(a-\ell-r)t_2$ with $t_1,t_2\in\mathbb{N}$ satisfying \eqref{eq:t's}. We furthermore claim that $x\in\mathscr{X}\backslash\{r\}$ if and only if $t_2\le \mu$.  
First note that for such an $x$, the inequality
 $$\mathbf{v}(x,0)=bx>cy=\mathbf{v}(0,y),
 $$
 where $y=qt_1+(q+1)t_2$ as stated above, rearranges to $At_1>Bt_2$. 
 
 Now to prove the claim, we first suppose that $1\le t_2\le \mu$. Consider any local minimum $\mathbf{v}(x',y')$ in the class $[bx]$ such that $(x',y')\ne (0,y)$ with $x'\ge 0$ and $0\le y'<y$. Suppose we reach $\mathbf{v}(0,y)$ from $\mathbf{v}(x',y')$ in $s_1$ $\downarrow$-steps and $s_2$ $\uparrow$-steps. Then, we obtain $x'=rs_1-(a-\ell-r)s_2>0$, so 
 \[
 s_1> \frac{(a-\ell-r)s_2}{r}.
 \]
  From the definition of $\mu$  and the fact that $s_2\le t_2\le\mu$, we have 
 \[
 \left\lfloor \frac{(a-\ell-r)s_2}{r}\right\rfloor=\left\lfloor \frac{Bs_2}{A}\right\rfloor
 \]
 and so, as
 \[
 \frac{(a-\ell-r)s_2}{r}<\frac{Bs_2}{A},
 \]
 it follows that $\frac{Bs_2}{A}\notin\mathbb{Z}$ 
 and that
 \[
 s_1\ge  \left\lceil \frac{Bs_2}{A}\right\rceil >\frac{Bs_2}{A}.
 \]
 We now get that
 \[
 \mathbf{v}(x',y')-\mathbf{v}(0,y)=As_1-Bs_2=A\left(s_1-\frac{Bs_2}{A}\right)>0.
 \]
 Hence $\mathbf{m}(bx)=cy$, and so $x\in\mathscr{X}$, whenever $t_2\le \mu$ and $0<x<r$ is of the above form. Indeed, recall  that any local minimum $\mathbf{v}(x',y')$ with $y'>y$, so occurring after $\mathbf{v}(0,y)$, will always satisfy $\mathbf{v}(x',y')>\mathbf{v}(0,y)$.
 
 Next, for the reverse direction, suppose that $t_2>\mu$. Note from \eqref{eq:t's} that
 \[
 t_1-1<\frac{(a-\ell-r)t_2}{r}<t_1,
 \]
 so in particular $\tfrac{(a-\ell-r)t_2}{r}\notin\mathbb{Z}$.  Then, from the definition of $\mu$, we have
 \begin{align*}
     t_1&< 1+ \frac{(a-\ell-r)t_2}{r}\\
     &= 1+ \frac{(a-\ell-r)(\mu+1)}{r}+ \frac{(a-\ell-r)(t_2-\mu-1)}{r}\\
     &\le 1+\left\lfloor \frac{(a-\ell-r)(\mu+1)}{r}\right\rfloor + \frac{(a-\ell-r)(t_2-\mu-1)}{r}\\
      &= \left\lfloor \frac{B(\mu+1)}{A}\right\rfloor + \frac{(a-\ell-r)(t_2-\mu-1)}{r}\\
       &< \frac{B(\mu+1)}{A} + \frac{B(t_2-\mu-1)}{A}
 \end{align*}
 and so we see that
 \[
 t_1<\frac{Bt_2}{A}.
 \]
 Hence, as seen above, the inequality $At_1< Bt_2$ is equivalent to $\mathbf{v}(x,0)<\mathbf{v}(0,y)$. Thus $x\notin\mathscr{X}$, as wanted. The claim is now proved.

 Finally, to finish the proof of the first statement, note that $$x=rt_1-(a-\ell-r)t_2=r-(a-\ell-r)t_2+r(t_1-1).$$ Since $\mathscr{X}\subseteq \{1,\ldots,r\}$, we have that $x=\big(r-(a-\ell-r)t_2\big) \bmod r$ for all $x\in\mathscr{X}$. The first equation in the statement now follows, using the equation $$(a-\ell-r)t_2=r\left\lfloor \frac{(a-\ell-r)t_2}{r}\right\rfloor+\big((a-\ell-r)t_2\big) \bmod r.$$

 Next, if $\mu <\lfloor \tfrac{r}{u}\rfloor$, then  $0\le ut<r$ for $0\le t\le \mu$. As $(a-\ell-r)t\equiv ut \bmod r$, it follows that $\mathscr{X}=\{r-ut\mid  0\le t\le \mu\}$ in this case.

 For the final statement, assuming $\Lambda>\Delta$ or $\Delta'>\Lambda'$, it suffices to prove the following two claims:
 \begin{enumerate}
     \item [(i)]   $\mu=\Delta$; and
       \item [(ii)]    $\mu< \lfloor \tfrac{r}{u}\rfloor$.
 \end{enumerate}
 For (i), we first  observe that $\lfloor\tfrac{\Delta B}{A}\rfloor=0$. Indeed,  otherwise we would have $\lfloor\tfrac{\Delta B}{A}\rfloor=1$ and in particular $\eta:=\tfrac{A}{B}\ge 2$ is a positive integer; compare also with Lemma~\ref{lem:A=B}. Consequently $\Delta'=0$ and we cannot have $\Delta'>\Lambda'$. Therefore, by assumption $\Lambda>\Delta=\eta$, which gives $\Lambda=\lfloor \tfrac{r}{a-\ell-r}\rfloor\ge \eta+1$. So $r\ge (\eta+1)(a-\ell-r)$. Returning to $\eta:=\tfrac{A}{B}$, we get
 \[
 br-cq=\eta[b(a-\ell-r)+c(q+1)],
 \]
 equivalently,
 \[
 b[\eta(a-\ell-r)-r]+c[\eta(q+1)+q]=0.
 \]
 Therefore $c$ divides $r-\eta(a-\ell-r)$. On the other hand, since $$
 r\ge (\eta+1)(a-\ell-r)>\eta(a-\ell-r)>0,
 $$
 we have
 \[
0<r-\eta(a-\ell-r)<r<a-\ell<c.
 \]
 This gives the desired contradiction to  $c$ dividing $r-\eta(a-\ell-r)$.  Hence $\lfloor\tfrac{\Delta B}{A}\rfloor=0$ and
 \[
 \lfloor\tfrac{i B}{A}\rfloor=0=\lfloor\tfrac{i (a-\ell-r)}{r}\rfloor
 \]
 for $0\le i \le \Delta$, since $\tfrac{B}{A}>\tfrac{a-\ell-r}{r}>0$. So to prove that $\mu=\Delta$, we need to show that
 \[
 \left\lfloor\tfrac{(\Delta+1)B}{A}\right\rfloor\ne \left\lfloor\tfrac{(\Delta+1)(a-\ell-r)}{r}\right\rfloor.
 \]
We begin by noting that $\lfloor\tfrac{(\Delta +1)B}{A}\rfloor\ge 1$. Then suppose first that  $\Delta'>\Lambda'$. Here $\mu=0$ by definition, and  as $\Delta'>\Lambda'$ implies that $\Delta'>0$, with reference to Lemma~\ref{lem:A=B} we obtain $\Delta=0=\mu$ in this case.

Next, if $\Lambda>\Delta$, then $$\left\lfloor\tfrac{(\Delta+1)(a-\ell-r)}{r}\right\rfloor\le \left\lfloor\tfrac{\Lambda(a-\ell-r)}{r}\right\rfloor.$$
 We likewise claim that $\lfloor\tfrac{\Lambda(a-\ell-r)}{r}\rfloor=0$. Indeed, since $\Lambda = \lfloor\tfrac{r}{a-\ell-r}\rfloor$, we have that
 \begin{align*}
     \left\lfloor\frac{\Lambda(a-\ell-r)}{r}\right\rfloor &= \left\lfloor\frac{\lfloor\tfrac{r}{a-\ell-r}\rfloor(a-\ell-r)}{r}\right\rfloor\\
     &= \left\lfloor\frac{r - \big(r \bmod (a-\ell-r)\big)}{r}\right\rfloor\\ &= 1 - \left\lceil\frac{r \bmod (a-\ell-r)}{r}\right\rceil,
 \end{align*}
 but the last expression is less than 1, since $r \not\equiv 0\bmod{(a-\ell-r)}$ by Lemma~\ref{lem:r-not-div-by-a-l-r}. Therefore $\lfloor\tfrac{\Lambda(a-\ell-r)}{r}\rfloor=0$, as wanted, and  so $\mu=\Delta$ in this second case too. 
 
 Lastly, we show (ii),  which completes the proof. For the case $\Lambda>\Delta$, note that $\Lambda'=0$ as $\Lambda>0$ and $r\not\equiv 0 \bmod {(a-\ell-r)}$. Therefore $u=a-\ell-r-\Lambda'r=a-\ell-r$ and $\mu=\Delta<\Lambda=\lfloor\tfrac{r}{a-\ell-r}\rfloor=\lfloor\tfrac{r}{u}\rfloor$. For the case $\Delta'>\Lambda'$, we recall that $\mu=\Delta=0$. Also we have 
 \begin{equation}\label{eq:u-less-than-r}
 u=a-\ell-r-\Lambda'r< r
 \end{equation}
 since $\tfrac{a-\ell-r}{r}<\Lambda'+1$. We obtain  $\lfloor\tfrac{r}{u}\rfloor>0$, as if $\lfloor\tfrac{r}{u}\rfloor=0$, then \[
 \left\lfloor\frac{r}{a-\ell-(1+\Lambda')r}\right\rfloor=0
 \]
 which gives $(1+\Lambda')r< a-\ell-r$, which is not possible, in light of \eqref{eq:u-less-than-r}. Thus $\mu=\Delta=0<\lfloor\tfrac{r}{u}\rfloor$, as wanted.
\end{proof}

In the case $\Delta'>\Lambda'$ above,  we have from the proof that $\mu=\Delta=0$, and so here $\mathscr{X}=\{r\}$.

\smallskip

We next recall some useful remarks from \cite{Tripathi}.
\begin{lemma}
    \cite[Rmk.~8]{Tripathi} \label{lem:rmk8}
     Suppose $\ell>k$ and $br>cq$. With reference to Notation~\ref{notation}, assume $\mu>\lfloor\tfrac{r}{u}\rfloor$. Then  $\Lambda'=\Delta'=0$, $u=a-\ell-r$, and $\lfloor \tfrac{r}{u}\rfloor=\Lambda=\Delta$. 
\end{lemma}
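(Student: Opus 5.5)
The plan is to write $d:=a-\ell-r$ throughout and to extract everything from the defining property of $\mu$: for every $1\le i\le \mu$ we have $\lfloor iB/A\rfloor=\lfloor id/r\rfloor$. The hypothesis $\mu>\lfloor r/u\rfloor\ge 0$ gives $\mu\ge 1$. Taking $i=1$ immediately yields $\Delta'=\lfloor B/A\rfloor=\lfloor d/r\rfloor=\Lambda'$.

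The key identity I will use repeatedly is
\[
Br-Ad=c\big((q+1)r+qd\big)=c\big(r+q(a-\ell)\big)=ca,
\]
so that $\frac{B}{A}=\frac{d}{r}+\frac{ca}{Ar}$. For each $i$ this makes $iB/A$ strictly larger than $id/r$ by $ica/(Ar)$. Hence $\lfloor iB/A\rfloor=\lfloor id/r\rfloor$ forces $(id\bmod r)+ica/A<r$.

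The first real step, and the one I expect to be the main obstacle, is to show $\Lambda'=0$, i.e.\ $d<r$. Suppose instead that $d\ge r$, so $d=\Lambda' r+u$ with $\Lambda'\ge 1$ and $0\le u<r$; note $u\neq 0$ by the coprimality argument of Lemma~\ref{lem:r-not-div-by-a-l-r}, adapted to show $r\nmid d$. Put $j:=\lfloor r/u\rfloor+1\le\mu$. As $i$ runs through $1,\dots,j$, the residues $iu$ increase and first exceed $r$ exactly at $i=j$. The agreement of floors at all these indices, together with $\Delta'=\Lambda'$ (so $B<(\Lambda'+1)A$), should then pin $ca/A$ to be smaller than a quantity like $(r-(j-1)u)/(j-1)$. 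Meanwhile $d\ge r$ makes $A=br-cq$ small relative to $B$. My first attempt will be to combine these two size constraints into a contradiction. If that fails, I will fall back on the divisibility trick from the proof of Lemma~\ref{lem:11}(i): produce an integer relation $b\cdot(\ast)=c\cdot(\ast\ast)$ with $0<|\ast|<c$, contradicting $\gcd(b,c)=1$.

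Once $d<r$ is established, $\Lambda'=\Delta'=0$ follows, and $u=d\bmod r=d=a-\ell-r$, so $\lfloor r/u\rfloor=\lfloor r/d\rfloor=\Lambda$. It remains to show $\Delta=\Lambda$, using the indices $i=\Lambda$ and $i=\Lambda+1$, both of which are at most $\mu$.

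For the lower bound, at $i=\Lambda$ the computation in the proof of Lemma~\ref{lem:11} (via Lemma~\ref{lem:r-not-div-by-a-l-r}) gives $\lfloor \Lambda d/r\rfloor=0$. Hence $\lfloor\Lambda B/A\rfloor=0$, i.e.\ $\Lambda B<A$, so $\Delta\ge\Lambda$.

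For the upper bound, at $i=\Lambda+1$ we have $(\Lambda+1)d>r$, so $\lfloor(\Lambda+1)B/A\rfloor\ge 1$, giving $(\Lambda+1)B\ge A$. Equality is excluded exactly as in part (i) of the proof of Lemma~\ref{lem:11}: it would give $c\mid r-\Lambda d$ with $0<r-\Lambda d<c$. Thus $(\Lambda+1)B>A$, so $\Delta\le\Lambda$, and therefore $\lfloor r/u\rfloor=\Lambda=\Delta$.
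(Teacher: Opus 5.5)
\textbf{Gap.} The gap is the step you flag as the main obstacle. You never prove $\Lambda'=0$, i.e.\ $a-\ell-r<r$; you only describe two strategies you intend to try. Neither can succeed, because this part of the statement is false as written. The paper gives no proof of this lemma; it simply quotes Tripathi's Remark~8.

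A counterexample is $(a,b,c)=(39,43,44)$, which is pairwise coprime with $c>b>a$. Then $k=1$ and $\ell=11$ (since $43\cdot 11-44=11\cdot 39$), so $a-\ell=28$, $q=1$, $r=11$, $br=473>44=cq$, $a-\ell-r=17$ and $u=6$. Moreover $A=429=11\cdot 39$ and $B=819=21\cdot 39$, so the definition of $\mu$ compares $\lfloor 21i/11\rfloor$ with $\lfloor 17i/11\rfloor$. These agree for $i=1,2$ (values $1$ and $3$) and differ at $i=3$ ($5$ versus $4$). Hence $\mu=2>1=\lfloor r/u\rfloor$. Yet $\Lambda'=\Delta'=1$, $u=6\neq 17=a-\ell-r$, and $\Lambda=\Delta=0\neq\lfloor r/u\rfloor$.

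\textbf{Why no size argument works.} Your own reformulation shows this. With $\theta:=ca/A$, agreement at index $i$ means $(iu\bmod r)+i\theta<r$. When $u>r/2$, the conditions at $i=1$ and at $i=2=\lfloor r/u\rfloor+1$ both reduce to $u+\theta<r$. That condition does not force $a-\ell-r<r$: in the example, $\theta=4$ and $u+\theta=10<11$.

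\textbf{What your argument does prove.} The rest of your argument establishes the conditional statement. If in addition $a-\ell-r<r$, then $\Delta'=\Lambda'=0$, $u=a-\ell-r$ and $\lfloor r/u\rfloor=\Lambda$. Comparing floors at $i=\Lambda$ and $i=\Lambda+1$ then yields $\Delta=\Lambda$. Two small repairs are needed there:
\begin{enumerate}
\item If $A=(\Lambda+1)B$, then $b\big(r-(\Lambda+1)(a-\ell-r)\big)=c\big(q+(\Lambda+1)(q+1)\big)$. This is impossible because the left side is negative; your expression $r-\Lambda d$ is not the one that arises.
\item Lemma~\ref{lem:r-not-div-by-a-l-r} gives nothing when $a-\ell-r=1$, since its proof only yields $a-\ell-r\mid\gcd(a,c)$. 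That case needs its own line. There $u=1$, and at $i=r$ one has $\lfloor rB/A\rfloor=\lfloor 1+\theta\rfloor\ge 2>1$, because $\theta>a/r>1$. So $\mu<r=\lfloor r/u\rfloor$, and the hypothesis cannot hold.
\end{enumerate}
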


Finally, we give the corrected version of \cite[Thm. 5]{Tripathi}, where only the statement of (b) needs to be changed. For completeness we also give the proof of part (a), which still holds despite the new definition of $\mu$. It turns out that the proof of \cite[Thm.~5(b)]{Tripathi} is valid subject to  replacing an incorrect value; see Remark~\ref{rmk:distances}. As done in other instances, for clarity we provide some more justification in the proof of both parts (a) and (b).  Also, we first prove an auxiliary result which will be key in the corrected  proof of part (b).

\begin{lemma}\label{lem:difference}
   Suppose $\ell>k$ and $br>cq$. With reference to Notation~\ref{notation}, Lemma~\ref{lem:11} and Lemma~\ref{lem:rmk8}, write
    $$\mathscr{X}=\{x_i:=r\left(\left\lfloor\tfrac{ui}{r}\right\rfloor+1\right)-ui\mid 0\le i\le \mu\}$$
    and assume $\mu>\lfloor\tfrac{r}{u}\rfloor$. Then  $x_\mu$ is the smallest element in $\mathscr{X}$ that is larger than~$u$. Furthermore, upon ordering the elements in~$\mathscr{X}$ in increasing order, the difference between two consecutive  elements in~$\mathscr{X}$ is either $\widehat{x}:=\min \mathscr{X}$    or    $\widehat{x}+u-x_\mu$.
\end{lemma}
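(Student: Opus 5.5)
We work with the explicit description of $\mathscr{X}$ coming from Lemma~\ref{lem:11}, simplified by Lemma~\ref{lem:rmk8}. Under $\mu>\lfloor r/u\rfloor$ we have $u=a-\ell-r<r$, so $x_i=r(\lfloor ui/r\rfloor+1)-ui = r-(ui \bmod r)$ for $0\le i\le\mu$. In other words, $x_i$ is determined by the residue $ui\bmod r$. The whole problem therefore becomes the study of the orbit $0,u,2u,\ldots,\mu u$ in $\mathbb{Z}/r\mathbb{Z}$, read through the order-reversing map $z\mapsto r-z$. This is a three-gap (Steinhaus) situation, and the plan is to prove directly the instance we need.

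First we show that $x_\mu$ is the smallest element of $\mathscr{X}$ exceeding $u$. We recast this via the defining property of $\mu$. For $1\le i\le\mu$, the equality of floors at $i$ means $Bi/A$ and $ui/r$ have the same integer part. Since $B/A>u/r$, the residues $ui\bmod r$ are exactly those with no ``$B$-crossing'' before the $u$-crossing. At $i=\mu+1$ the floors differ, and $\lfloor(\mu+1)B/A\rfloor$ jumps while $\lfloor(\mu+1)u/r\rfloor$ does not. From this we aim to read off that $u\mu\bmod r$ lies in $(r-u-?,\ r-u)$; more precisely, among $0,\ldots,\mu$ the value $(u\mu\bmod r)$ is the largest residue below $r-u$.

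Concretely, we would show the following:
\begin{enumerate}
\item[(a)] $(u\mu \bmod r)<r-u$. Otherwise $u(\mu+1)$ wraps around, so $\lfloor u(\mu+1)/r\rfloor$ increases. One checks that $\lfloor B(\mu+1)/A\rfloor$ then also increases, because $B/A$ sits between consecutive ratios. This contradicts the definition of $\mu$.
\item[(b)] No $j<\mu$ has $u\mu\bmod r<uj\bmod r<r-u$. If some $j$ did, then $u(\mu-j)\bmod r$ would be a small negative residue. We would derive that the floor equality already fails at a smaller index, again using $A,B$ only through the inequalities $u/r<B/A$ and the minimality in the definition of $\mu$.
\end{enumerate}
Translating by $x=r-z$, these give $x_\mu>u$ and that no element of $\mathscr{X}$ lies strictly between $u$ and $x_\mu$.

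For the gap statement, we use the standard three-distance argument on the finite orbit $\{iu \bmod r: 0\le i\le\mu\}$, which we treat as points on a circle of length $r$ together with the point $r$ (equivalently $x_0=r$). Let $i_0$ be the index with $ui_0\bmod r$ maximal, so that $x_{i_0}=\widehat{x}$ is the minimum. Let $\mu$ be the index of the largest residue below $r-u$. The key observation is that translation by $\pm u$ maps gaps to gaps, except near the ``boundary'' points. Thus the successor of $x_i$ in increasing order is $x_{i-i_0}$ when $i\ge i_0$, giving a difference of $\widehat{x}$. Otherwise it is $x_{i+(\mu-i_0)}$ or similar, giving a difference of $\widehat{x}+u-x_\mu$. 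The third possible gap of the three-gap theorem is ruled out because the orbit endpoint and the point $x_\mu$ (the nearest element above $u$) coincide with the boundary. In other words, the first part of the lemma is exactly what eliminates the third length.

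The main obstacle we anticipate is step (b) together with the boundary bookkeeping in the three-gap argument. Namely, we must show that the successor index $i+(\mu-i_0)$ or $i-i_0$ always lies in $[0,\mu]$. This requires controlling $i_0$ relative to $\mu$ using $\mu>\lfloor r/u\rfloor$ and $\Lambda=\Delta=\lfloor r/u\rfloor$ from Lemma~\ref{lem:rmk8}. We would first attempt this by an induction on $i$ along the increasing ordering, verifying at each step that the candidate successor index is admissible by comparing the floors $\lfloor iB/A\rfloor$ and $\lfloor iu/r\rfloor$.
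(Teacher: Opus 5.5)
Your plan is sound and takes a different, more explicit route than the paper, although the three-gap bookkeeping as written has errors that need fixing (see the last paragraph).

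\textbf{Comparison with the paper.} The paper gets $x_\mu>u$ by the same remainder comparison you outline in (a). After that it argues pictorially. It plots $(i,x_i)$ and appeals to the periodicity of $i\mapsto(r-ui)\bmod r$ on the bands $[0,u],[u,2u],\dots$. From this it asserts that only two consecutive distances occur, and reads off the position of $x_\mu$ from the ``slopes''. You instead pass to the rotation $s_i:=ui\bmod r$ with $x_i=r-s_i$, use the defining inequalities of $\mu$ a second time to get (b), and then quote the three-distance theorem. Your route shows explicitly where minimality of $\mu$ enters. That matters, because the first claim does not follow from $x_\mu>u$ alone: for $r=10$, $u=3$ one has $x_4=8>u$, yet $x_2=4\in(u,x_4)$. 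The paper's route buys the picture.

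\textbf{Making (a) and (b) quantitative.} Put $\tau:=rB/A-u>0$. The floor equality at index $i$ is equivalent to $s_i+i\tau<r$. This holds for $1\le i\le\mu$ and fails at $i=\mu+1$.
\begin{itemize}
\item[(a)] The relevant facts are $u/r<B/A<1$. Here $B<A$ because $\mu\ge1$ forces $\lfloor B/A\rfloor=0$. So if $u(\mu+1)$ wrapped, both floors would rise by exactly one and would agree at $\mu+1$, a contradiction.
\item[(b)] Suppose $1\le j<\mu$ and $s_\mu<s_j<r-u$. Subtracting the valid inequality at $j+1$ from the failing one at $\mu+1$ gives $s_j-s_\mu<(\mu-j)\tau$. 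Since $s_{\mu-j}=r-(s_j-s_\mu)$, the inequality then fails at index $\mu-j<\mu$, contradicting minimality. So your sketch of (b) goes through.
\end{itemize}

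\textbf{The three-gap step is misstated.} Let $i_0$ be the index of $\widehat x$ and $a:=\mu+1-i_0$. The element of $\mathscr X$ immediately \emph{below} $x_i$ is:
\begin{itemize}
\item $x_{i-i_0}=x_i-\widehat x$ for $i>i_0$;
\item $x_{i+a}=x_i-(\widehat x+u-x_\mu)$ for $i<i_0$.
\end{itemize}
So $x_{i-i_0}$ is a predecessor, not a successor. In the other case the shift is $\mu+1-i_0$, not $\mu-i_0$. Equivalently, the successor of $x_i$ is $x_{i+i_0}$ for $i<a$ and $x_{i-a}$ for $i\ge a$.

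The precise reason there is no third length is as follows. Statements (a) and (b) together say exactly that $s_{\mu+1}=s_\mu+u$ exceeds every one of $s_1,\dots,s_\mu$. Let $a'$ be the index of the least positive $s_i$ with $1\le i\le\mu$. The three-gap theorem produces a third length only when $a'+i_0>\mu+1$. In that case $s_{\mu+1}=s_{\mu+1-a'}+s_{a'}$ lands strictly inside a third-length gap, and that gap lies below $s_{i_0}$, which is impossible. Hence $a'=\mu+1-i_0$. Then $s_{a'}=s_{\mu+1}-s_{i_0}=\widehat x+u-x_\mu$, which identifies the second length.

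With these corrections your argument proves the lemma.
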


\begin{proof}
  We first show that $x_\mu>u$. To this end, recall that
\[
x_\mu=r\left(\left\lfloor\frac{\mu u}{r}\right\rfloor+1\right)-\mu u=r\left(\left\lfloor\frac{\mu B }{A}\right\rfloor+1\right)- \mu u.
\]
Recall also that $ \frac{u}{r}<\frac{B}{A}$, so from the definition of $\mu$, we have
\begin{align*}
\mu u&=\left\lfloor \frac{\mu u}{r}\right\rfloor r+\delta\\
\mu B&=\left\lfloor \frac{\mu u}{r}\right\rfloor A+\varepsilon
\end{align*}
for some  $0\le \delta<r$ and $0\le \varepsilon<A$, and
\begin{align*}
(\mu +1) u&=\left\lfloor \frac{\mu u}{r}\right\rfloor r+(\delta+u)\\
(\mu +1) B&=\left\lfloor \frac{\mu u}{r}\right\rfloor A+\varepsilon +B=\left( \left\lfloor \frac{\mu u}{r}\right\rfloor +1\right)A+(\varepsilon+B-A)
\end{align*}
with $\delta+u<r$ and $0\le \varepsilon+B-A<A$. Hence, since $u=\left\lfloor \frac{\mu u}{r}\right\rfloor r-\mu u+(\delta+u)$ with $\delta+u<r$, we get that
\[
x_\mu=\left\lfloor \frac{\mu u}{r}\right\rfloor r-\mu u +r>u.
\]

It remains to show that $x_\mu$ is the smallest element in $\mathscr{X}$ that is larger than~$u$. Recall that the expression $r-ui$ is  linear  in $i$, and therefore $(r-ui) \bmod r$ satisfies a corresponding periodic linearity. Hence, if we view the distribution of the elements in $\mathscr{X}$ graphically as points $(i,x_i)$ in $\mathbb{R}^2$, for $i\in\{0,1,\ldots,\mu\}$, we see that the pattern of the distribution is periodic within the intervals $[0,u], [u,2u],\ldots, [(\lfloor \tfrac{r}{u}\rfloor-1) u,\lfloor \tfrac{r}{u}\rfloor u]$ along the vertical axis of our graph, and also correspondingly for the last interval $[\lfloor \tfrac{r}{u}\rfloor u,r]$. Indeed, for every $x_j\in \mathscr{X}$ with $x_j<u$, we have that $u<x_{j-1}=x_j+u<2u$,  $2u<x_{j-2}=x_j+2u<3u$, and so on.

If $m\in\{0,1,\ldots,\mu-1\}$ is such that $x_m=\widehat{x} =\min \mathscr{X}$, this linearity immediately implies that the only distances between consecutive increasing elements in~$\mathscr{X}$ are the constant distance $d_1$ between $x_m$ and the next term in the ordered list of elements in $\mathscr{X}$, and the distance $d_2$ between the last term of the finite arithmetic sequence $x_m$, $x_m+d_1$, $\ldots$, and the term in $\mathscr{X}$ immediately following it. Graphically, one can view the distribution of the elements in $\mathscr{X}$ as points lying on finite-length slopes of the same negative, or positive, gradient, with the first slope ending, respectively beginning, at $x_m$, and the next such slope has its lowest point (in~$\mathscr{X}$) a vertical  distance of $d_2$ from the highest point (in $\mathscr{X}$) of the previous slope. See Figures~\ref{fig:1} and \ref{fig:2} for some examples.

Consider the finite arithmetic sequence $x_m$, $x_m+d_1$, $\ldots$. If the gradient of the slope going through this sequence is negative, then, appealing to the aforementioned periodicity within the intervals $[0,u]$, $[u,2u]$, etc, we deduce that $x_{\mu}$ is the last point (in $\mathscr{X}$) going through the lowest slope in the vertical interval $[u,2u]$. In particular, as mentioned above, for every  $x_\mu\ne x_j\in \mathscr{X}$ with $u<x_j<2u$, we have that $0<x_{j+1}=x_j-u<u$. Therefore, all elements in $\mathscr{X}$ lying on the slope just below the one ending in $x_\mu$, are strictly less than $u$, and these elements are also in one-to-one correspondence with the elements in $\mathscr{X}$ on the lowest slope going through $x_m$. If instead the gradient of the slope going through the finite arithmetic sequence $x_m$, $x_m+d_1$, $\ldots$ is positive, then $x_{\mu}$ is the last point in $\mathscr{X}$ going through the highest slope which begins in the interval $[0,u]$. Similarly, all elements in $\mathscr{X}$ lying on the slope  ending in $x_\mu$, apart from $x_\mu$ itself, are strictly less than $u$, and these elements are also in one-to-one correspondence with the elements in $\mathscr{X}$ on the lowest slope going through $x_m$.  Hence it follows that $x_\mu$ is the smallest element in $\mathscr{X}$ that is greater than $u$.

 For the final statement, with the identification of 0 and $r$,  the periodicity of the slopes yield that one of these distances is the distance between 0 and $x_m = \widehat{x}$, and by the above, the other distance is $x_{m-1}-x_\mu = \widehat{x} + u - x_\mu$, as required.
\end{proof}

\begin{remark}\label{rmk:distances}
    It appears from the proof of~\cite[Thm.~5(b)]{Tripathi} that it was mistakenly assumed that $m=\lfloor \tfrac{r}{u} \rfloor$ (Figures~\ref{fig:1} and \ref{fig:2} above also show that this is not the case) and that $x_\mu=x_{2\lfloor \tfrac{r}{u} \rfloor}$. Hence in~\cite[Proof of Thm.~5(b)]{Tripathi}, the two differences were stated to be $x_m$ and $\lceil \tfrac{r}{u} \rceil u-r$ . The resulting incorrect claim concerning the distances between consecutive increasing elements in~$\mathscr{X}$ is the only oversight in the proof of \cite[Thm.~5(b)]{Tripathi}.
\end{remark}

\begin{theorem}
    [{cf. \cite[Thm.~5]{Tripathi}}]\label{thm:5} 
    Using the above notation, the following hold: 
    \begin{enumerate}
        \item [\textup{(a)}] If $\mu<\lfloor \tfrac{r}{u}\rfloor$, then
        \begin{align*}
        &g(a,b,c)+a=\\
        &\quad\max\left\{b(r-\mu u-1),b(u-1)+c\big(\mu(q+1)+(\lfloor\tfrac{(a-\ell-r)\mu}{r}\rfloor+1)q\big)\right\}+cq\lfloor\tfrac{a-\ell-1}{r}\rfloor.
\end{align*}
Furthermore, if $\Lambda>\Delta$, then 
 \begin{align*}
        &g(a,b,c)+a=\\
        &\quad\max\left\{b(r-\Delta(a-\ell-r)-1),b(a-\ell-r-1)+c\big(\Delta(q+1)+q\big)\right\}+cq
\end{align*}
and if $\Delta'>\Lambda'$, then 
 \begin{align*}
        &g(a,b,c)+a=\max\left\{b(r-1),b\big((a-\ell-1) \bmod r\big) +cq\right\}+cq\lfloor\tfrac{a-\ell-1}{r}\rfloor.
\end{align*}
         \item [\textup{(b)}] 
        If $\mu>\lfloor\tfrac{r}{u}\rfloor$,        recall from Lemma~\ref{lem:11}  that $\mathscr{X}=\{x_i\mid i\in\{0,1,\ldots, \mu\}\}$, where
         \[
         x_i=r\left(\left\lfloor\tfrac{(a-\ell-r)i}{r}\right\rfloor+1\right)-(a-\ell-r)i,
         \]
         and set $$y_i=q\left(\left\lfloor\tfrac{(a-\ell-r)i}{r}\right\rfloor+1\right)+(q+1)i$$ for $i\in\{0,1,\ldots, \mu\}$. Suppose $m\in\{0,1,\ldots, \mu\}$ is such that $x_m:=\min\mathscr{X}$, and let $w\in\{0,1,\ldots, \mu\}$ be maximal such that $x_{w}+x_m\in\mathscr{X}$. Then
         \[
         g(a,b,c)+a=\max\left\{b(x_m-1)+cy_{w},\, b(x_{m-1}-x_{\mu}-1)+cy_{\mu}\right\}+cq\lfloor\tfrac{a-\ell-1}{r}\rfloor.
         \]
    \end{enumerate}
\end{theorem}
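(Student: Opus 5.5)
Both parts rest on Lemma~\ref{lem3} and Lemma~\ref{lem:9}. Since $\mathbf{m}(bx)$ is the minimum over the $\mathbf{v}$-values in the class $[bx]$, Lemma~\ref{lem3} gives $g(a,b,c)+a=\max_{0\le x<a}\mathbf{m}(bx)$. Lemma~\ref{lem:9} then reduces this to computing
\[
M:=\max\Big\{\max_{0\le x<u}\mathbf{m}(bx)+cq,\ \max_{u\le x<r}\mathbf{m}(bx)\Big\},
\]
after which the common summand $cq\lfloor\tfrac{a-\ell-1}{r}\rfloor$ is added. The main tool for evaluating $\mathbf{m}(bx)$ with $0\le x<r$ is the following: $\mathbf{m}(bx)=bx$ for $x<\widehat{x}=\min\mathscr{X}$, by Lemma~\ref{lem:rmk6}. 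More generally, every $x\in\{1,\dots,r\}$ decomposes greedily as $x=x''+x_j$ with $x_j\in\mathscr{X}$ maximal such that $x_j\le x$. We then expect
\[
\mathbf{m}(bx)=\mathbf{m}(bx'')+cy_j .
\]
The justification: from $\mathbf{v}(x,0)$ one performs the $\uparrow$/$\downarrow$-steps of Lemma~\ref{lem:11}. The minimum along the trajectory is attained at a point $\mathbf{v}(x'',y)$ with $x''$ below the next element of $\mathscr{X}$. Comparing with the trajectory from $\mathbf{v}(x-x'',0)$ shows that $y$ is exactly the $y_j$ of the element of $\mathscr{X}$ that is used. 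Here $y_i=q(\lfloor\tfrac{(a-\ell-r)i}{r}\rfloor+1)+(q+1)i$ counts $t_1q+t_2(q+1)$, with $t_2=i$ and $t_1=\lfloor\cdot\rfloor+1$, as in the proof of Lemma~\ref{lem:11}.

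For part (a), with $\mu<\lfloor r/u\rfloor$, Lemma~\ref{lem:11} gives $\mathscr{X}=\{r-ut: 0\le t\le\mu\}$, an arithmetic progression with difference $u$ and minimum $r-\mu u$. For $x<r-\mu u$ we have $\mathbf{m}(bx)=bx$, so the second inner maximum contributes $b(r-\mu u-1)$. Every $x\ge r-\mu u$ in $[u,r)$ has $\mathbf{m}(bx)$ smaller than this value. For $0\le x<u$ the largest candidate is $x=u-1$. Using the step decomposition from $\mathbf{v}(u-1,0)$, the cheapest representation is $b(u-1)+cy_\mu$ plus an extra $cq$ from Lemma~\ref{lem:9}, and one checks that $y_\mu=\mu(q+1)+q(\lfloor(a-\ell-r)\mu/r\rfloor+1)$. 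The two special cases then follow by substituting $\mu=\Delta$, $u=a-\ell-r$, $\Lambda'=0$ in the first case, and $\mu=0$ in the second, using the final part of Lemma~\ref{lem:11} and its proof. In the case $\Lambda>\Delta$ one also needs $\lfloor\tfrac{a-\ell-1}{r}\rfloor=1$, which follows from $u=a-\ell-r<r$. In the case $\Delta'>\Lambda'$, note that $(a-\ell-1)\bmod r=u-1$.

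For part (b), $\mathscr{X}$ is no longer an arithmetic progression. Lemma~\ref{lem:difference} supplies its structure: the gaps between consecutive elements are $\widehat{x}=x_m$ or $x_{m-1}-x_\mu=\widehat{x}+u-x_\mu$, and $x_\mu$ is the least element of $\mathscr{X}$ exceeding $u$. The candidates for the maximum are the points just before an element of $\mathscr{X}$, that is $x=x_j+(\text{gap})-1$, whose $\mathbf{m}$-value is $b(\text{gap}-1)+cy_j$. First, the gap $\widehat{x}$ gives $b(x_m-1)+cy_j$, maximised at the largest $j$ for which $x_j+x_m\in\mathscr{X}$, namely $j=w$. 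Second, the gap $\widehat{x}+u-x_\mu$ occurs above $x_\mu$ and gives $b(x_{m-1}-x_\mu-1)+cy_\mu$. Points in $[0,u)$ carry the extra $cq$; I would show that they are dominated by, or coincide with, these two terms, using that $x_\mu$ is the first element above $u$ so that $u-1$ lies in the gap below $x_\mu$.

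The main obstacle is making the decomposition $\mathbf{m}(bx)=\mathbf{m}(bx'')+cy_j$ rigorous. This requires ruling out cheaper trajectories that use a smaller element of $\mathscr{X}$ with a larger residual. I would first argue through the monotonicity of $y_i$ in $i$ combined with the inequality $At_1>Bt_2$ from Lemma~\ref{lem:11}. Matching the $[0,u)$ contribution against the two terms in (b) is the second delicate point.
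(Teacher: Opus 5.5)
Your outline of part (b) matches the paper's argument. Part (a), however, misidentifies where the two terms of the maximum come from, so as written it does not derive the formula.

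Two of your claims fail. First, it is false that every $x\ge r-\mu u$ in $[u,r)$ has $\mathbf{m}(bx)<b(r-\mu u-1)$. For $\mu\ge1$ the point $x=\widehat{x}+u-1=x_\mu+u-1$ lies in $[u,r)$, just below $x_{\mu-1}=x_\mu+u$. The shift argument gives $\mathbf{m}(bx)=b(u-1)+cy_\mu$, which can exceed $b(r-\mu u-1)$; this point is exactly the source of the second term. Second, since $(\mu+1)u\le r$ you have $u\le\widehat{x}$. Lemma~\ref{lem:rmk6} then gives $\mathbf{m}(b(u-1))=b(u-1)$, so the block $0\le x<u$ contributes only $b(u-1)+cq$ to $M$, not $b(u-1)+cy_\mu$ (with or without an extra $cq$).

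The paper's bookkeeping is as follows. Since $b(u-1)+cq\le b(u-1)+cy_\mu$, one gets $M=\max_{u\le x<r}\mathbf{m}(bx)=\max\{b(\widehat{x}-1),\,b(u-1)+cy_\mu\}$, with only the second term when $u=\widehat{x}$. Put differently, apply in (a) the principle you already use in (b): maxima sit just below elements of $\mathscr{X}$, and in (a) every gap equals $u$. Only when $\mu=0$ does the term $b(u-1)+cy_0=b(u-1)+cq$ really come from $x=u-1$. That is why your reading does no harm in the subcase $\Delta'>\Lambda'$.

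A concrete check: take $(a,b,c)=(23,25,43)$. Then $k=1$, $\ell=r=10$, $q=1$, $u=3$, $A=207$, $B=161$, $\mu=1<3=\lfloor r/u\rfloor$, $\mathscr{X}=\{7,10\}$ and $y_1=3$. Here $\mathbf{m}(9b)=2b+3c=179>150=b(r-\mu u-1)$, while $\max_{0\le x<3}\mathbf{m}(bx)+cq=93$. The two sources you name, evaluated correctly, give $g+a=150+43=193$. The true value is $179+43=222$, i.e.\ $g=199$, and $199$ is indeed not a non-negative combination of $23,25,43$.

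Separately, you flag the decomposition $\mathbf{m}(b(x_j+z))=bz+cy_j$ as the main obstacle and only sketch it via monotonicity of $y_i$ and $At_1>Bt_2$. The paper handles it with a more concrete device. Apply to $(x_j+z,0)$ the same sequence of $\uparrow$/$\downarrow$-steps that takes $(x_j,0)$ to $(0,y_j)$; this lands at $(z,y_j)$. The $\mathbf{v}$-values along this path are sandwiched termwise between those of the paths starting at $x_j$ and at $x_{j-1}=x_j+u$. The single exception is the index $y_{j-1}$, where the latter path reaches $x$-coordinate $0$, and it is checked directly.
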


\begin{proof}
    (a) If $\mu<\lfloor \tfrac{r}{u}\rfloor$, then from Lemma~\ref{lem:11} we have $\mathscr{X}=\{r-ui\mid  i\in\{0,1,\ldots, \mu\}\}$. So for $i\in\{0,1,\ldots, \mu\}$, 
    $$x_i=r-ui=r\left(\left\lfloor\tfrac{(a-\ell-r)i}{r}\right\rfloor+1\right)-(a-\ell-r)i\in\mathscr{X}
    $$
    and  $\mathbf{m}(bx_i)=cy_i$, where
    \[
    y_i=(q+1)i+q\left(\left\lfloor\tfrac{(a-\ell-r)i}{r}\right\rfloor+1\right)
    \]
    since we need $i$ $\uparrow$-steps and $\left(\left\lfloor\tfrac{(a-\ell-r)i}{r}\right\rfloor+1\right)$ $\downarrow$-steps to arrive at $\mathbf{v}(0,y_i)$ from $\mathbf{v}(x_i,0)$. Note that clearly $\max \{cy_i\mid i\in\{0,1,\ldots, \mu\}\}=cy_{\mu}$.

    Now, if $x<\widehat{x}:=\min \mathscr{X}=x_\mu=r-\mu u$, then $\mathbf{m}(bx)=bx$ by Definition~\ref{def:3} and Lemma~\ref{lem:rmk6}. Next, note that any integer $x\notin\mathscr{X}$, with $\widehat{x}<x<r$, is of the form $x_i+z_i$ with $1\le i\le \mu$  and $0<z_i<u$. Starting from $(x,0)$ and then performing  $i$ $\uparrow$-steps and $(\lfloor\tfrac{(a-\ell-r)i}{r}\rfloor+1)$ $\downarrow$-steps (in the same sequence as for going from  $\mathbf{v}(x_i,0)$ to $\mathbf{v}(0,y_i)$) leads to the local minimum $\mathbf{v}(z_i,y_i)$. Indeed, as $\mathbf{v}(x_i,0)$ and $\mathbf{v}(0,y_i)$ are local minima, shifting their first coordinates by $z_i$ does not change their property of being local minima. In particular, since  $0<z_i<u$ and $x_{i-1}=x_i+u$, we observe that 
    $$
    \mathbf{v}\big((x_i+j(a-\ell)) \bmod a,j\big)<\mathbf{v}\big((x_i+z_i+j(a-\ell))\bmod a,j\big)<\mathbf{v}\big((x_{i-1}+j(a-\ell))\bmod a,j\big)
    $$ 
    for all $0\le j\le y_i$ apart from when $(x_{i-1}+j(a-\ell))\bmod a<u$. From the definition of $x_{i-1}$ and from the definition of the $\mathbf{v}$-values, since $r\not\equiv 0\bmod u$ we see that $(x_{i-1}+j(a-\ell))\bmod a<u$ happens only once, when $(x_{i-1}+j(a-\ell))\bmod a=0$. In this case, $j=y_{i-1}$ and we still have $$\mathbf{v}\big((x_i+y_{i-1}(a-\ell)) \bmod a,y_{i-1}\big)<\mathbf{v}\big((x_i+z_i+y_{i-1}(a-\ell))\bmod a,y_{i-1}\big).
    $$
    Hence our claim is justified.
    
    Now note that $\mathbf{v}(0,y_i)$ is the minimum $\mathbf{v}$-value in the class $[bx_i]$. Then, as reasoned above, upon shifting all the $\mathbf{v}$-values by $bz_i$ yields that $\mathbf{v}(z_i,y_i)$ is the minimum $\mathbf{m}(bx)$ within the congruence class $[bx]$, for $x=x_i+z_i$. Note also that $u\le \widehat{x}=r-\mu u$, as $\mu+1\le \frac{r}{u}$. So
 \[
 \max_{0\le x< u}\mathbf{m}(bx)+cq =b(u-1)+cq,
 \]
and since, by the above discussion, we have $\mathbf{m}(\widehat{x}+u-1)=\mathbf{v}(u-1,y_\mu)$, we see that 
 \begin{align*}
 \max\left\{\max_{0\le x< u}\mathbf{m}(bx)+cq,\, \max_{u\le x< r}\mathbf{m}(bx)\right\}&=\max_{u\le x< r}\mathbf{m}(bx)\\
 &=\begin{cases}
     \mathbf{v}(u-1,y_\mu) & \text{if }u=\widehat{x},\\
     \max \left\{\mathbf{v}(\widehat{x}-1,0),\, \mathbf{v}(u-1,y_{\mu})\right\} & \text{if }u<\widehat{x}.
 \end{cases}
 \end{align*}
 Hence, by  Lemma~\ref{lem:9}  and also by Lemma~\ref{lem3}, we obtain from the above considerations that
\begin{align*}
    g(a,b,c)+a&=\max \left\{\max_{0\le x< u}\mathbf{m}(bx)+cq,\, \max_{u\le x< r}\mathbf{m}(bx)\right\}+cq\left\lfloor\tfrac{a-\ell-1}{r}\right\rfloor\\
    &=\max \left\{\mathbf{v}(\widehat{x}-1,0),\, \mathbf{v}(u-1,y_{\mu})\right\}+cq\left\lfloor\tfrac{a-\ell-1}{r}\right\rfloor\\
    &=\max \left\{b(r-\mu u-1),\, b(u-1)+c\big((q+1)\mu+q(\big\lfloor\tfrac{(a-\ell-r)\mu}{r}\big\rfloor+1)\big)\right\}\\
    &\quad +cq\left\lfloor\tfrac{a-\ell-1}{r}\right\rfloor.
\end{align*}
Hence the first statement. 

For the next statement, note that from the proof of Lemma~\ref{lem:11}, if $\Lambda>\Delta$ or if $\Delta'>\Lambda'$ we have $\Delta=\mu<\tfrac{r}{u}$. When $\Lambda>\Delta$, from the proof of Lemma~\ref{lem:11} we have that  $u=a-\ell-r$. It then follows that $\lfloor \tfrac{a-\ell-1}{r}\rfloor=1$. Indeed, since $u<r$ we have $a-\ell-r<r$ and thus $a-\ell-r+(r-1)<2r-1$, which gives that  $\lfloor \tfrac{a-\ell-1}{r}\rfloor<2$. The claim then follows, as $\lfloor \tfrac{a-\ell-1}{r}\rfloor\ge 1$ from the fact that $r\le a-\ell-1$. Finally, when $\Delta'>\Lambda'$, then $\Delta=0$, and the corresponding result then follows.

\smallskip

(b) Recall that $\mathbf{m}(bx)=bx$ for all $x<\widehat{x}:=\min \mathscr{X}$ by Lemma~\ref{lem:rmk6}, and $\mathbf{m}(bx)=cy$ for $x\in\mathscr{X}$ by Definition~\ref{def:3}. For $x\notin\mathscr{X}$ with $x>\widehat{x}$, as before choose the largest element $x_j\in\mathscr{X}$ such that $x_j<x$ and write $x=x_j+z$ for some $z>0$. Note that such an element $x_j$ exists since  $\mathscr{X}$ is non-empty as $\mu\ne 0$. Recall from the above also that $\mathbf{m}(bx_j)=cy_j$. We next consider the sequence of $\uparrow$-steps and $\downarrow$-steps from $(x_j,0)$ to $(0,y_j)$, and we apply this sequence to $(x,0)$, which brings us to $(z,y_j)$.   Therefore, as argued in part (a), 
\begin{equation}\label{eq:m(x-1)}
\mathbf{m}(bx)=\mathbf{m}(bx_j)+b(x-x_j)=b(x-x_j)+cy_j.
\end{equation}
Hence by Lemma~\ref{lem:9},
\begin{align*}
    \max_{0\le x<a}\mathbf{m}(bx)&=\max \left\{\max_{0< x< u,\, x\in\mathscr{X}}\mathbf{m}(b(x-1))+cq,\, \max_{u\le x< r,\, x\in\mathscr{X}}\mathbf{m}(b(x-1))\right\}+cq\left\lfloor\tfrac{a-\ell-1}{r}\right\rfloor.
\end{align*}
    Indeed, from the above, we see that the maximum is achieved among all $x\notin\mathscr{X}$, and there the maximum is when $z$ is as large as possible, so $x$ is as far away from the nearest $x_j$ as possible.

Recall from Lemma~\ref{lem:difference} that there are exactly two differences that occur between consecutive increasing elements in~$\mathscr{X}$, and we denote these two differences  by $d_1=x_{m-1}-x_\mu$ and $d_2=\widehat{x}$. 
Hence, appealing to \eqref{eq:m(x-1)} and recalling from Lemma~\ref{lem:difference} that $x_\mu>u$, it follows that 
\begin{equation}\label{eq:final}
    \max_{0\le x<a}\mathbf{m}(bx)= \left(\max_{u\le x< r,\, x\in\mathscr{X}}\mathbf{m}(b(x-1))\right)+cq\left\lfloor\tfrac{a-\ell-1}{r}\right\rfloor.
\end{equation}
Again from \eqref{eq:m(x-1)}, to determine the above maximum, it suffices to consider, for $i\in\{1,2\}$,  the largest $j_i\in\{1,\ldots,\mu\}$ such that $x_{j_i}+d_i$ is in~$\mathscr{X}$.  Recall that $w$ is maximal such that $x_w+d_2=x_w+x_m=x_w+\widehat{x}$ is in~$\mathscr{X}$. From Lemma~\ref{lem:difference}, we have that $x_{m-1}=x_\mu +d_1\in\mathscr{X}$.   The result then follows from \eqref{eq:final} and \eqref{eq:m(x-1)}. 
\end{proof}

As promised in the introduction, here is the analogously corrected statement of the parallel result \cite[Thm.~6(b)]{Tripathi} to \cite[Thm.~5(b)]{Tripathi}. We first recall from \cite{Tripathi} that
\[
\overline{q}:=\left\lfloor\frac{a}{\ell}\right\rfloor,\qquad\overline{r}:=a-\overline{q}\ell.  
\]
and
\[
\overline{\mathscr{X}}:=\{x\in\{0,1,\ldots,\ell-\overline{r}\}\mid \mathbf{m}(bx)\equiv  0\bmod c\}.
\]
Further
\[
\overline{A}:=b(\ell-\overline{r})-c(\overline{q}+1),\qquad \overline{B}:=b\overline{r}+c\overline{q},\qquad \overline{u}=\overline{r} \bmod {(\ell-\overline{r})},
\]
and
\[
\overline{\mu}:= \mathbf{\min}\left\{i \in \mathbb{Z}_{\ge 0}\, \bigg\vert \left\lfloor \frac{(i+1)\overline{B}}{\overline{A}}\right\rfloor \neq \left\lfloor \frac{(i+1)\overline{r}}{\ell-\overline{r}}\right\rfloor\right\}.
\]
As for $\mu$, the $\overline{\mu}$ is defined here in an analogously different manner as compared to the definition in~\cite{Tripathi}.
\begin{theorem}
    [{cf. \cite[Thm.~6(b)]{Tripathi}}]\label{thm:6(b)} 
   Suppose $\overline{\mu}>\lfloor\tfrac{\ell-\overline{r}}{\overline{u}}\rfloor$.         With reference to \cite[Lem~12]{Tripathi},  write $\overline{\mathscr{X}}=\{x_i\mid i\in\{0,1,\ldots, \overline{\mu}\}\}$, where
         \[
         x_i=(\ell-\overline{r})\left(\left\lfloor\frac{\overline{r}i}{\ell-\overline{r}}\right\rfloor+1\right)-\overline{r}i,
         \]
         and set $$y_i=(\overline{q}+1)\left(\left\lfloor\frac{\overline{r}i}{\ell-\overline{r}}\right\rfloor+1\right)+\overline{q}i$$ for $i\in\{0,1,\ldots, \overline{\mu}\}$. Suppose $m\in\{0,1,\ldots, \overline{\mu}\}$ is such that $x_m:=\min\overline{\mathscr{X}}$, and let $w\in\{0,1,\ldots, \overline{\mu}\}$ be maximal such that $x_{w}+x_m\in\overline{\mathscr{X}}$. Then
         \[
         g(a,b,c)+a=\max\left\{b(x_m-1)+cy_{w},\, b(x_{m-1}-x_{\bar{\mu}}-1)+cy_{\bar{\mu}}\right\}+c\left((\overline{q}+1)\left\lfloor\frac{\ell-1}{\ell-\overline{r}}\right\rfloor-2\right).
         \]
\end{theorem}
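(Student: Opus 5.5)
The plan is to run the proof of Theorem~\ref{thm:5}(b) in the ``barred'' coordinates of \cite{Tripathi}. There the roles of $a-\ell$ and $\ell$ are swapped. Concretely, we walk through consecutive $\mathbf{v}$-values in the opposite direction, stepping $(x,y)\mapsto(x+\ell,y-1)$ modulo $a$, or equivalently parametrise the class $[bx]$ from the other end. The replacements are:
\begin{itemize}
\item $q,r,A,B,u,\mu,\mathscr{X}$ become $\overline{q},\overline{r},\overline{A},\overline{B},\overline{u},\overline{\mu},\overline{\mathscr{X}}$;
\item the $\downarrow$-step and $\uparrow$-step become the two basic moves of sizes $(-(\ell-\overline{r}),\overline{q}+1)$ and $(\overline{r},\overline{q})$, which change the $\mathbf{v}$-value by $-\overline{A}$ and $+\overline{B}$ respectively.
\end{itemize}

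First I would establish the analogue of Lemma~\ref{lem:failure} in this setting. Its role is to show that between two true local minima only these two moves occur, apart from the degenerate case where the first is iterated several times. The argument is the counting from Lemma~\ref{lem:5} with $\ell$ and $a-\ell$ interchanged. This step is what justifies reusing \cite[Lem.~12]{Tripathi} once $\mu$ is replaced by $\overline{\mu}$, exactly as was done for Lemma~\ref{lem:11}. In particular, I would recheck the proof of \cite[Lem.~12]{Tripathi} with the new $\overline{\mu}$ by copying the ``if and only if $t_2\le\overline{\mu}$'' argument from Lemma~\ref{lem:11} verbatim. That check yields $\overline{\mathscr{X}}=\{x_i\}$ with $\mathbf{m}(bx_i)=cy_i$ for the stated $x_i,y_i$.

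Next I would take the analogue of Lemma~\ref{lem:9} from \cite[Lem.~10]{Tripathi} (the parallel reduction). It expresses $\max_x\mathbf{m}(bx)$ as a maximum over $x<\ell-\overline{r}$ plus the constant $c\big((\overline{q}+1)\lfloor\tfrac{\ell-1}{\ell-\overline{r}}\rfloor-2\big)$. The reduction should be unaffected by the Lemma~5 issue, since it only uses first-decrease steps as in Lemma~\ref{lem:7}. I would also state the analogue of Lemma~\ref{lem:rmk8}: if $\overline{\mu}>\lfloor\tfrac{\ell-\overline{r}}{\overline{u}}\rfloor$, then $\overline{u}=\overline{r}$, and the relevant quantity plays the role of $\Lambda=\Delta$.

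Then I would prove the analogue of Lemma~\ref{lem:difference}. Since $x_i\equiv(\ell-\overline{r})-\overline{u}i$ modulo $\ell-\overline{r}$, the same periodicity picture applies: slopes of constant gradient within the bands $[j\overline{u},(j+1)\overline{u}]$. From it I would get two conclusions. First, $x_{\overline{\mu}}$ is the smallest element of $\overline{\mathscr{X}}$ exceeding $\overline{u}$. Second, the gaps between consecutive elements of $\overline{\mathscr{X}}$ are exactly $x_m$ and $x_{m-1}-x_{\overline{\mu}}$. The inequality $x_{\overline{\mu}}>\overline{u}$ follows from the same two division-with-remainder identities, using that the floors first disagree at $\overline{\mu}+1$.

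Finally I would repeat the proof of Theorem~\ref{thm:5}(b). For $x\notin\overline{\mathscr{X}}$, take the largest $x_j<x$; transporting the path from $(x_j,0)$ to $(0,y_j)$ gives $\mathbf{m}(bx)=b(x-x_j)+cy_j$, by the same shifting argument as in part~(a). The maximum is therefore attained just below an element of $\overline{\mathscr{X}}$ following a gap. For the gap $x_m$, the best choice is the largest $w$ with $x_w+x_m\in\overline{\mathscr{X}}$, giving $b(x_m-1)+cy_w$. For the other gap, the index is $\overline{\mu}$, giving $b(x_{m-1}-x_{\overline{\mu}}-1)+cy_{\overline{\mu}}$. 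Adding the constant and applying Lemma~\ref{lem3} yields the formula. I expect the main obstacle to be the gap lemma: verifying carefully that the orientation of the slopes and the role of $x_{\overline{\mu}}$ carry over when the moves are reversed, and that the boundary term $0\leftrightarrow\ell-\overline{r}$ identification is correct.
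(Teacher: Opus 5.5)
The overall route is the one the paper intends. The paper gives no separate proof of this theorem; it only asserts that the corrections made for Theorem~\ref{thm:5}(b) carry over to Tripathi's barred setting, namely the new $\overline{\mu}$ and the analogues of Lemmas~\ref{lem:11} and~\ref{lem:difference}. However, you misstate one step, and it is exactly the step that does not carry over verbatim.

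\textbf{The reduction step.} The barred analogue of Lemma~\ref{lem:9} is not a plain maximum over $x<\ell-\overline{r}$ plus a constant. It follows from $\mathbf{m}(bx)=\mathbf{m}(b(x-\ell))+c$ for $x\ge\ell$, and from $\mathbf{m}(bx)=\mathbf{m}(b(x-\ell+\overline{r}))+c(\overline{q}+1)$ for $\ell-\overline{r}\le x<\ell$ (using $\overline{A}>0$). These give
\[
\max_{0\le x<a}\mathbf{m}(bx)=\max\left\{\max_{0\le x<\overline{u}}\mathbf{m}(bx)+c(\overline{q}+1),\,\max_{\overline{u}\le x<\ell-\overline{r}}\mathbf{m}(bx)\right\}+c\left((\overline{q}+1)\left\lfloor\frac{\ell-1}{\ell-\overline{r}}\right\rfloor-2\right).
\]
So you must show that the range $x<\overline{u}$, with its extra $c(\overline{q}+1)$, never gives the maximum. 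Your last paragraph never does this. You derive that $x_{\overline{\mu}}$ is the smallest element of $\overline{\mathscr{X}}$ above $\overline{u}$, but you never use it; in the proof of Theorem~\ref{thm:5}(b) that fact is precisely what discards $x<u$.

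\textbf{Why the unbarred argument does not transfer.} In the unbarred case the extra term is $cq$, and consecutive $y_i$ differ by at least $q+1$. So any $x_i<u$ (necessarily $i<\mu$) is beaten by a later index. In the barred case the extra term is $c(\overline{q}+1)$, but one barred $\uparrow$-step raises $y$ by only $\overline{q}$.

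\textbf{The missing observation.} If $x_i<\overline{u}=\overline{r}$, a wrap-around is forced: $x_{i+1}=x_i-\overline{r}+(\ell-\overline{r})$. Hence $\lfloor \overline{r}(i+1)/(\ell-\overline{r})\rfloor=\lfloor \overline{r}i/(\ell-\overline{r})\rfloor+1$, and so $y_{i+1}=y_i+2\overline{q}+1$.

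\textbf{Finishing the domination.} In $\overline{\mathscr{X}}$, the elements $x_1,\ldots,x_w$ are followed by a gap $x_m$, and $x_{w+1},\ldots,x_{\overline{\mu}}$ by the gap $x_{m-1}-x_{\overline{\mu}}$. Take a bonus value $bz+c(y_i+\overline{q}+1)$ with $x_i+z<\overline{u}$.
\begin{itemize}
\item[(i)] If $i<w$, it is dominated by $b(x_m-1)+cy_w$.
\item[(ii)] If $i>w$, it is dominated by $b(x_{m-1}-x_{\overline{\mu}}-1)+cy_{\overline{\mu}}$.
\item[(iii)] If $i=w$, it is dominated by that same second candidate. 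Here $z<\overline{u}-x_w=x_{m-1}-x_{\overline{\mu}}$, because $x_w+x_m=x_{\overline{\mu}}$.
\item[(iv)] The points $x<x_m$ are covered by $y_w\ge y_0=\overline{q}+1$.
\end{itemize}

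\textbf{A smaller slip.} When $\overline{q}=1$, the move iterated between true local minima in the barred setting is $(\overline{r},\overline{q})$, a single increase, not $(-(\ell-\overline{r}),\overline{q}+1)$. This does not affect the argument.
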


 \begin{figure}[h!]
 \centering
  \includegraphics[width=0.83\textwidth]{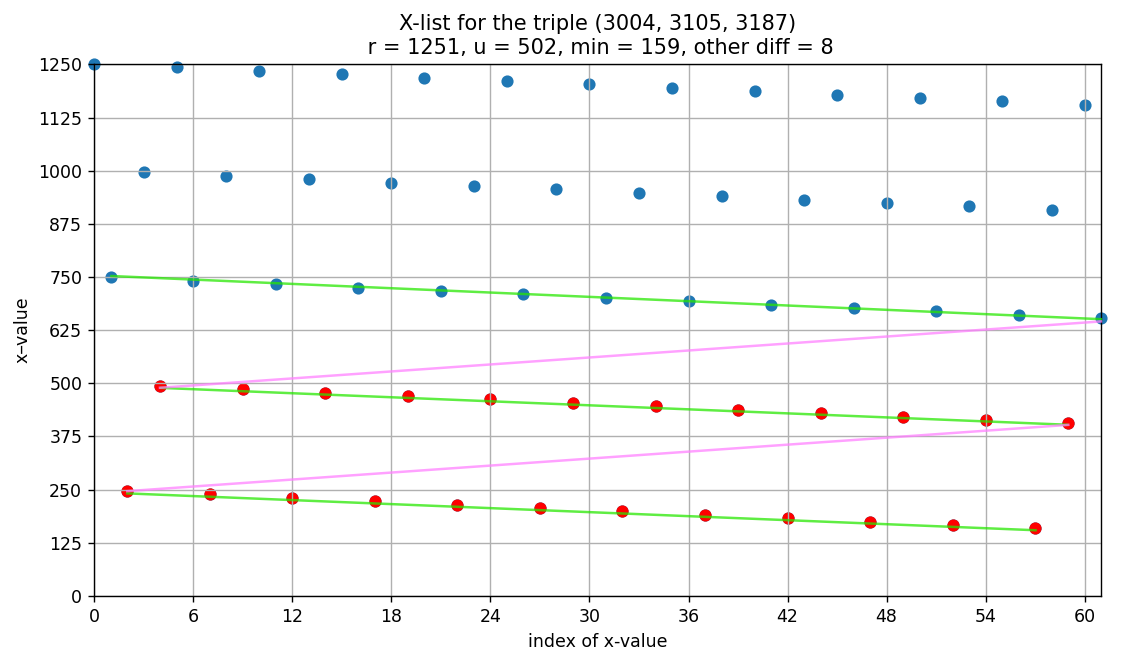}
 \caption{\label{fig:1}An example of a graphical depiction of the values in $\mathscr{X}$ where the slope (depicted by the green line) going through  the lowest points has a negative gradient. The red dots denote the $x$-values less than $u$.}
 \end{figure}
 
 \begin{figure}[h!]
 \centering
  \includegraphics[width=0.9\textwidth]{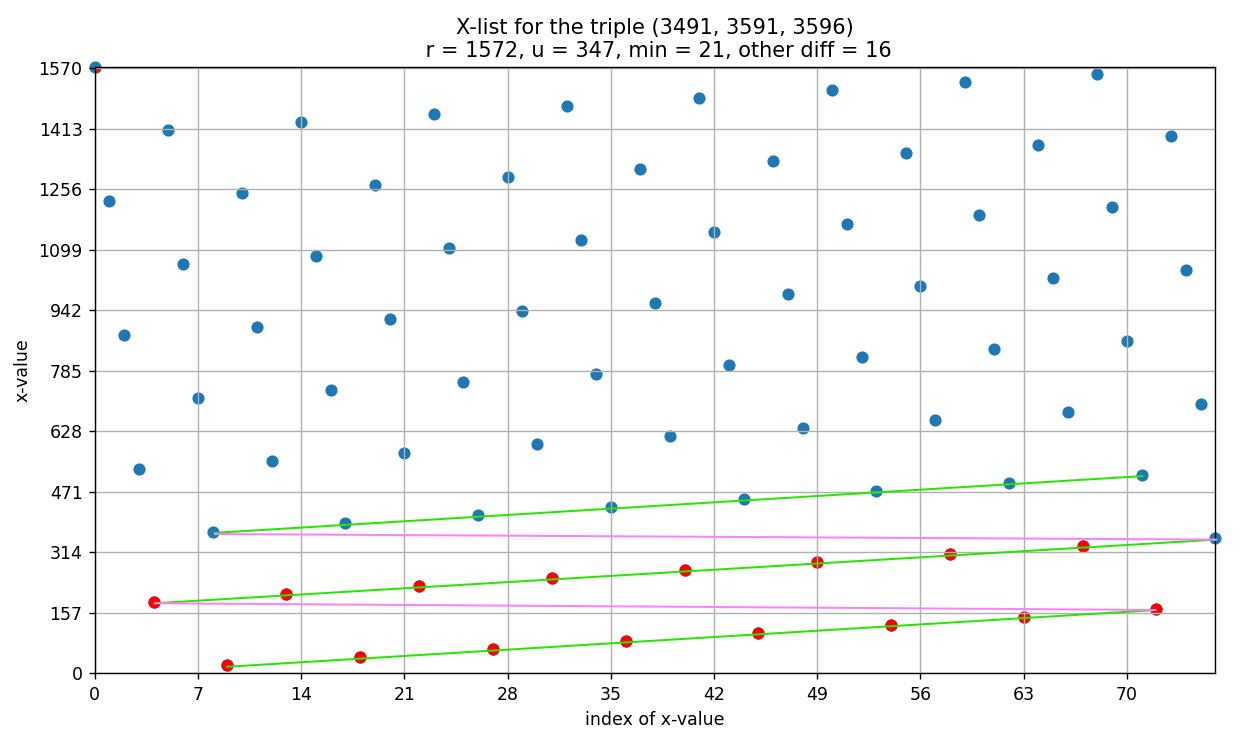}
 \caption{\label{fig:2}An example of a graphical depiction of the values in $\mathscr{X}$ where the slope (see the green lines) going through  the lowest points has a positive gradient. The height of the green and pink lines represent the distances $d_1 = \widehat{x}$ and $d_2 = \widehat{x}+u-x_\mu$ respectively.}
 \end{figure}

%%%%%

\end{document}